\newcommand{\tcb}[1]{\textcolor{blue}{#1}}
\numberwithin{equation}{section}
\theoremstyle{plain}
\newtheorem{Proposition}[equation]{Proposition}
\newtheorem{Corollary}[equation]{Corollary}
\newtheorem*{Corollary*}{Corollary}
\newtheorem{Theorem}[equation]{Theorem}
\newtheorem*{Theorem*}{Theorem}
\newtheorem{Lemma}[equation]{Lemma}
\theoremstyle{definition}
\newtheorem{Example}[equation]{Example}
\newtheorem{Remark}[equation]{Remark}
\def\HH{\mathscr{H}}
\def\MM{\mathscr{M}}
\def\C{\mathbb{C}}
\def\R{\mathbb{R}}
\def\D{\mathbb{D}}
\def\T{\mathbb{T}}
\def\N{\mathbb{N}}
\def\Z{\mathbb{Z}}
\def\K{\mathcal{K}}
\def\phi{\varphi}
\def\mU{\mathcal{U}}
\newcommand{\dist}{\operatorname{dist}}
\renewcommand{\ker}{\operatorname{Ker}}
\renewcommand{\dim}{\operatorname{dim}}
\newcommand{\beqa}{\begin{eqnarray*}}
\newcommand{\eeqa}{\end{eqnarray*}}
\newcommand{\dst}{\displaystyle}
\renewcommand{\leq}{\leqslant}
\renewcommand{\subset}{\subseteq}
\title{Multipliers between model spaces}
\author[Fricain]{Emmanuel Fricain}
 \address{Laboratoire Paul Painlev\'e, Universit\'e Lille 1, 59 655 Villeneuve d'Ascq C\'edex }
 \email{emmanuel.fricain@math.univ-lille1.fr}
\author[Hartmann]{Andreas Hartmann}
\address{Institut de Math\'ematiques de Bordeaux, Universit\'e de Bordeaux, 351 cours de la Lib\'eration 33405 Talence C\'edex, France}
\email{Andreas.Hartmann@math.u-bordeaux.fr}
\author[Ross]{William T. Ross}
	\address{Department of Mathematics and Computer Science, University of Richmond, Richmond, VA 23173, USA}
	\email{wross@richmond.edu}
\date{}
\keywords{Hardy spaces, inner functions, model spaces, multipliers, de Branges Rovnyak spaces, Clark measures, level sets}
\subjclass[2010]{30J05, 30H10, 46E22}
\begin{document}

\baselineskip=17pt

\begin{abstract}In this paper we examine the multipliers from one model space to another.  
\end{abstract}

\maketitle

\section{Introduction}

For an inner function $\Theta$, let $\K_{\Theta} := H^2 \cap (\Theta H^2)^{\perp}$
 denote the {\em model space} of the open unit disk $\D$ corresponding to $\Theta$. In this paper, we explore, for a pair of inner functions $u$ and $v$, the {\em multipliers} 
$$
\MM(u, v) := \{\phi \in \operatorname{Hol}(\D): \phi \K_u \subset \K_v\}
$$
between $\K_u$ and $\K_v$. 

One motivation for this paper comes from the work of Crofoot \cite{Crofoot} who considered a more restricted version of $\MM(u, v)$ namely 
$
\{\phi \in \operatorname{Hol}(\D): \phi \K_u = \K_v\},
$
in other words, the multipliers from $\K_u$ {\em onto} $\K_v$ (see also \cite[Def.~3.7]{MR3460059}). As it turns out, these onto multipliers are
unique up to multiplicative constants and are outer functions. Another motivation comes from examining pre-orders on partial isometries \cite{GMR, MR3616198}. 

The Crofoot discussion becomes quite different if we relax the (onto) multiplier condition $\phi \K_{u}  = \K_v$ to just
$\phi \K_u \subset \K_v$. For one, as we shall see below, these (into but not necessarily onto) multipliers need not be outer functions. 
Secondly, unlike the onto multipliers, the into multipliers need not be unique. In fact, we give an example of when $\MM(u, v)$ is infinite dimensional and contains unbounded functions. 

After a few initial observations about $\MM(u, v)$ we will reformulate the description of $\MM(u, v)$ in terms of Carleson measures of model spaces and kernels of Toeplitz operators. Along the way, we will describe $\MM(u, v)$ when $v$ is an inner multiple of $u$. We will then relate $\MM(u, v)$ to the boundary spectra of $u$ and $v$ along with their sub-level sets.

We also consider multipliers for the model spaces of the upper half plane. In this setting we discuss a particular entire function introduced by Lyubarskii and Seip which allows us to deduce the existence of unbounded onto multipliers connecting to a question raised by Crofoot. As discussed earlier, the onto multipliers are unique (up to multiplicative constants) and thus the multipliers algebra in this case is one dimensional. In the spirit of the  Lyubarskii and Seip construction above, we produce $u$ and $v$ such that $\MM(u, v) = \C \phi$, yet $\phi$ is not an onto multiplier. 

\subsection*{Acknowledgement}
We would like to thank Cristina C\^amara for some insightful discussions along with the anonymous referee for some suggested improvements (especially Theorem \ref{ppppooiuy},  Theorem \ref{|22122144141}, Proposition \ref{oooooAC}, and suggesting the problem we answered in Theorem \ref{39782194uro3iq}).

\section{Notation, observations, and simplifications}\label{MM}

We assume the reader is familiar with the Hardy space $H^2$ \cite{Duren, Garnett} and model spaces $\K_u$  \cite{MSGMR, Nik}.  In this paper, $\D$ is the open unit disk, $\T$ the unit circle, $m$ normalized Lebesgue measure on $\T$, and $L^2$ the standard Lebesgue space $L^2 := L^2(\T, m)$ with norm $\|f\|$
 and inner product $\langle \cdot, \cdot \rangle$. The bounded analytic functions on $\D$ are denoted by $H^{\infty}$.
Recall that $H^2$ is a reproducing kernel Hilbert space with kernel $k_{\lambda}(z)=(1-\overline{\lambda}z)^{-1}$. 

We begin with some useful observations. 
First notice that
 $\MM(u,v)\subseteq H^2$.
Indeed, if 
$$
 k_{\lambda}^u(z)=\frac{1-\overline{u(\lambda)}u(z)}{1-\overline{\lambda}z},\quad
 \lambda,z\in\D,
$$
denotes the reproducing kernel for $\K_u$,
then $k^{u}_0 = 1 - \overline{u(0)} u \in \K_u$ is an invertible element of $H^{\infty}$. Thus if $\phi \in \MM(u, v)$ then $\phi k^{u}_0 \in \K_v \subseteq H^2$ from which the result follows.

Furthermore, when $\phi \in \MM(u, v)$, the closed graph theorem says that $M_{\phi} f = \phi f$ is a bounded operator from $\K_u$ to $\K_v$ and standard arguments show that
$
M_{\phi}^{*} k_{\lambda}^{v} = \overline{\phi(\lambda)} k_{\lambda}^{u}.
$
Since 
$$\|k_{\lambda}^{u}\|^2 =  \frac{1 - |u(\lambda)|^2}{1 - |\lambda|^2},$$ it follows that 
\begin{equation}\label{8whhsuyshffsdf}
|\phi(\lambda)|^2 (1 - |u(\lambda)|^2) \lesssim (1 - |v(\lambda)|^2), \quad \lambda \in \D.
\end{equation}
Though this inequality will be used later on, it does not prove that $\phi$ is always bounded. The following Proposition summarizes some basic facts which follow, or can be gleaned, from Crofoot's  paper \cite{Crofoot}. 

\begin{Proposition}\label{sdfjsldkfjldskf66666}
Let $u$ and $v$ be inner functions.
\begin{enumerate}
\item[(i)] $\MM(u, u) = \C$.
\item[(ii)] If $\phi \K_u = \K_v$ then $\phi$ is outer. 
\item[(iii)] $\C \subset \MM(u, v)$ if and only if $u$ divides $v$. 
\item[(iv)] Suppose $u$ divides $v$ and $u$ is not a constant multiple of $v$. Then $\MM(v, u) = \{0\}$.
\item[(v)] If $\phi \in \MM(u, v)$ and $F$ is the outer factor of $\phi$, then $F \in \MM(u, v)$. 
\item[(vi)] If $a \in \D$ and 
$
u_a := \frac{u - a}{1 - \overline{a} u},$
then 
${\displaystyle \frac{1}{1 - \overline{a} u} \K_u  = \K_{u_a}}.$
\end{enumerate}
\end{Proposition}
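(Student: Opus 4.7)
The plan is to work through the six parts in order, using two tools throughout: the eigenvector identity $M_\phi^* k_\lambda^u = \overline{\phi(\lambda)}\,k_\lambda^u$ (valid for any $\phi \in \MM(u,v)$, as derived just above the proposition) and inner-outer factorization on $\T$. For (i), the identity makes every $k_\lambda^u$ an eigenvector of $M_\phi^*$ on $\K_u$ with eigenvalue $\overline{\phi(\lambda)}$; but $\langle k_{\lambda_1}^u, k_{\lambda_2}^u\rangle = k_{\lambda_1}^u(\lambda_2) = (1 - \overline{u(\lambda_1)}u(\lambda_2))/(1 - \overline{\lambda_1}\lambda_2)$ never vanishes on $\D \times \D$, so no two such kernels are orthogonal, hence their eigenvalues must all agree and $\phi$ is constant. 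For (ii), if $\phi\K_u = \K_v$ then $M_\phi$ is a bounded bijection and so is $M_\phi^*$; therefore $\overline{\phi(\lambda)}$ is never zero, so $\phi$ has no zeros in $\D$ and its inner factor is a singular function $S_\sigma$. Writing $\phi = S_\sigma F$ yields $\K_v = \phi\K_u \subset S_\sigma H^2$; applying this to $k_0^v \in \K_v$, which is $H^\infty$-invertible and thus outer, forces $S_\sigma$ to be constant.

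For (iii), $1 \in \MM(u,v)$ reads $\K_u \subset \K_v$, equivalent to $vH^2 \subset uH^2$, equivalent to $v/u \in H^2$, equivalent (since $|v/u|=1$ a.e.\ on $\T$) to $v/u$ being inner, i.e.\ $u \mid v$. For (iv), writing $v = uw$ with $w$ non-constant inner, part (iii) gives $\K_u \subset \K_v$, so any $\phi \in \MM(v,u)$ satisfies $\phi\K_u \subset \phi\K_v \subset \K_u$, i.e.\ $\phi \in \MM(u,u) = \C$ by (i); a nonzero constant in $\MM(v,u)$ would give $\K_v \subset \K_u$, whence $\K_u = \K_v$ and $u$ would be a scalar multiple of $v$, contradicting the hypothesis.

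For (v), write $\phi = IF$ with $I$ inner and $F$ outer. For $f \in \K_u$ the hypothesis $\phi f \in \K_v$ reads
$$
\int IFf\,\overline{vh}\,dm = 0 \quad\text{for every } h \in H^2.
$$
Testing only against $h = Ig$ with $g \in H^2$ and using $|I|^2 = 1$ on $\T$ collapses this to $\int Ff\,\overline{vg}\,dm = 0$ for every $g \in H^2$; since $Ff \in H^2$ (it lies in the Smirnov class $N^+$ with $|Ff|=|\phi f|\in L^2$), we conclude $Ff \perp vH^2$, i.e.\ $Ff \in \K_v$. For (vi), a direct algebraic computation gives
$$
1 - \overline{u_a(\lambda)}\,u_a(z) = \frac{(1-|a|^2)\bigl(1 - \overline{u(\lambda)}\,u(z)\bigr)}{\bigl(1 - a\overline{u(\lambda)}\bigr)\bigl(1 - \bar{a}u(z)\bigr)},
$$
so $\tfrac{1}{1-\bar{a}u}\,k_\lambda^u$ is a nonzero scalar multiple of $k_\lambda^{u_a}$. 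The companion identity $1 - |u_a(\lambda)|^2 = (1-|a|^2)(1-|u(\lambda)|^2)/|1-\bar{a}u(\lambda)|^2$, combined with a Gram-matrix check on finite kernel combinations, shows that $M_{1/(1-\bar{a}u)}$ is $(1-|a|^2)^{-1/2}$ times an isometry on the span of $\{k_\lambda^u\}_{\lambda \in \D}$, hence extends to a bounded bijection $\K_u \to \K_{u_a}$.

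The step I expect to require the most care is (v): the boundary-value trick of testing only against $h = Ig$ (which keeps all inner factors in the analytic half) is what transfers the multiplier property from $\phi$ to its outer factor, and it does not follow from the pointwise inequality \eqref{8whhsuyshffsdf}; the Smirnov-class verification that $Ff \in H^2$ is also essential. The remaining items are largely a synthesis of the eigenvector identity with standard inner-outer factorization together with the $\K_u \subset \K_v \Leftrightarrow u \mid v$ dictionary.
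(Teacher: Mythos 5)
The paper states this proposition without proof (it is attributed to Crofoot), so the only question is whether your arguments stand on their own. Parts (ii)--(vi) do: the Smirnov-class bookkeeping in (ii) and (v), the dictionary $\K_u \subseteq \K_v \iff u \mid v$ in (iii), and the kernel identity together with the Gram computation in (vi) all check out.

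The genuine gap is in (i). From $M_\phi^* k_\lambda^u = \overline{\phi(\lambda)}\,k_\lambda^u$ and the fact that no two kernels are orthogonal you conclude that all the eigenvalues $\overline{\phi(\lambda)}$ coincide. That inference is only valid for normal operators, or when one vector is an eigenvector of $M_\phi$ and the other of $M_\phi^*$; eigenvectors of a general bounded operator belonging to distinct eigenvalues need not be orthogonal. Your argument applied verbatim to $M_z$ on $H^2$ --- a reproducing kernel space whose kernels satisfy $k_\lambda(\mu) = (1-\overline{\lambda}\mu)^{-1} \neq 0$ and for which $M_z^* k_\lambda = \overline{\lambda} k_\lambda$ with all eigenvalues distinct --- would ``prove'' that $z$ is constant. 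So (i) needs an input specific to model spaces. One correct route: test $\phi$ against the two functions $k_0^u = 1 - \overline{u(0)}u$ and $S^*u = \overline{z}(u - u(0))$, both in $\K_u$. The condition $\phi k_0^u \perp uH^2$ gives $T_{\overline{u}}\phi = \overline{u(0)}\phi$, while $\phi S^*u \perp uH^2$ gives $S^*\phi = u(0)\, S^*T_{\overline{u}}\phi$; combining these yields $(1-|u(0)|^2)S^*\phi = 0$, hence $\phi$ is constant. (Equivalently, Theorem \ref{ppppooiuy} with $v = u$ gives $\MM(u,u) \subseteq \ker T_{\overline{z}} = \C$.) Note that your part (iv) invokes (i) as a black box; its reduction to (i) is correct, but it inherits this gap until (i) is repaired.
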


The map $f \mapsto (1 - \overline{a} u)^{-1} f$ from $\K_{u}$ onto $\K_{u_a}$ is a constant multiple of the unitary {\em Crofoot transform}. Using operator theory techniques, Crofoot \cite[Theorem 14]{Crofoot} showed that when the space of onto multipliers is non-empty, then $\sigma(u) = \sigma(v)$, where 
$$\sigma(u) := \Big\{\xi \in \T: \varliminf_{z \to \xi} |u(z)| = 0\Big\}$$ is the {\em boundary spectrum} of an inner function. 
The following result is the $\MM(u, v)$ analogue of this  where our proof uses function theory.

\begin{Proposition}\label{23094ygtoriefb}
If $\MM(u, v) \not = \{0\}$ then $\sigma(u) \subseteq \sigma(v)$. 
\end{Proposition}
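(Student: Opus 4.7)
The plan is to prove the contrapositive inclusion $\T\setminus\sigma(v)\subseteq\T\setminus\sigma(u)$: for each $\xi\in\T\setminus\sigma(v)$, the existence of a nonzero $\phi\in\MM(u,v)$ forces $\xi\in\T\setminus\sigma(u)$. Since $\xi\notin\sigma(v)$ is equivalent to $v$ admitting an analytic continuation to a neighborhood of $\xi$ (by Schwarz reflection, using $|v|=1$ on $\T$), the argument is really one about analytic continuation across $\T$.

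The first ingredient is the classical theorem of Moeller (see e.g.\ \cite{MSGMR}) which asserts that for any inner $v$ and any $\xi\in\T\setminus\sigma(v)$, \emph{every} $f\in\K_v$ admits an analytic continuation to a common neighborhood $U$ of $\xi$. Applying this to the functions $\phi k_a^u\in\K_v$ for each $a\in\D$, and using that $(1-\overline{a}z)^{-1}$ is holomorphic on $U$ (since $a\in\D$), one deduces that the map
\begin{equation*}
 z\longmapsto \phi(z)\bigl(1-\overline{u(a)}\,u(z)\bigr)
\end{equation*}
extends analytically to $U$ for every $a\in\D$. If $u$ is a unimodular constant then $\sigma(u)=\emptyset$ and the conclusion is vacuous, so we may assume $u$ is non-constant and pick $a_1,a_2\in\D$ with $u(a_1)\ne u(a_2)$. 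Subtracting the extensions at $a_1$ and $a_2$ yields that $(\overline{u(a_2)}-\overline{u(a_1)})\,\phi u$, and hence $\phi u$, extends analytically to $U$; adding $\overline{u(a_1)}\phi u$ to $\phi(1-\overline{u(a_1)}u)$ shows that $\phi$ also extends analytically to $U$.

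To finish, since $\phi\not\equiv0$ on $\D$ its extension to $U$ is not identically zero, so has only isolated zeros on $U$; writing $u=(\phi u)/\phi$ exhibits $u$ as a meromorphic function on $U$. The bound $|u|\leq 1$ on $U\cap\D$ rules out poles on $U\cap\overline{\D}$, so after shrinking $U$ if necessary the extension of $u$ is holomorphic on a neighborhood of $\xi$, i.e., $\xi\notin\sigma(u)$. The main obstacle is the Moeller-type input in the first step; once that is granted, the remainder is a short algebraic manipulation with reproducing kernels combined with the maximum principle.
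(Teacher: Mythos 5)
Your proof is correct, and while it shares the paper's basic strategy (Moeller's analytic-continuation theorem for $\K_v$ at a point $\xi\notin\sigma(v)$, followed by division by $\phi$), the execution differs in three genuine ways. First, the paper normalizes $u(0)=0$ via the Crofoot transform so that $1\in\K_u$ and hence $\phi=\phi\cdot 1\in\K_v$ continues across $\xi$; you instead extract the continuations of both $\phi$ and $\phi u$ by applying Moeller to $\phi k_{a_1}^u,\,\phi k_{a_2}^u\in\K_v$ and taking linear combinations, which avoids the transform entirely and hands you $\phi u$ for free. Second, the paper continues every $f\in\K_u$ across $\xi$ (writing $f=(\phi f)/\phi$) and then invokes the $\K_u$-side of Moeller's equivalence, whereas you continue $u$ itself as $(\phi u)/\phi$ and need only the elementary characterization of $\sigma(u)$ in terms of analytic continuation of $u$. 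Third, poles of the quotient are excluded differently: the paper uses square-integrability of $H^2$ boundary values, you use $|u|\leq 1$ on $\D$; both work, and your explicit shrinking of $U$ to dodge the discrete zero set of (the extension of) $\phi$ is, if anything, slightly more careful than the paper's treatment, which only addresses a possible pole at $\xi$ itself. Two cosmetic points: in the first step what you actually need is that $1-\overline{a}z$ is entire, so that you may multiply the continued function $\phi k_a^u$ by it, rather than that its reciprocal is holomorphic on $U$; and the claim that the extension of $\phi$ is not identically zero on $U$ rests on the identity principle applied on $U\cap\D$, so $U$ should be chosen (say, a disk about $\xi$) with $U\cap\D$ connected.
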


\begin{proof}
Without loss of generality, we can use Proposition \ref{sdfjsldkfjldskf66666} (vi) and assume that $u(0) = 0$ (the Crofoot transform preserves the regular points in $\T$). Then $1 \in \K_u$ and so 
$\phi \K_u \subseteq \K_v \implies \phi \in \K_v.$
Pick $\zeta\in \T\setminus \sigma(v)$ (a regular point for $v$).
Then \cite[p.~153]{MSGMR} every function in $\K_v$ has an analytic continuation to a two-dimensional open neighborhood $\Omega$ of $\zeta$. 
In particular, $\phi \in \K_v$ enjoys this property. 
For every $f \in \K_u$, $g:=\varphi f\in \K_v$ has an analytic continuation to $\Omega$ and so $f=g/\varphi$ is either analytic on $\Omega$ or has a pole of order at least
$1$ at $\zeta$. But this second case is not possible since $f\in H^2$ must be square integrable on 
$\T$. Hence $f$ extends analytically to $\Omega$ and thus $\zeta\in \T
\setminus \sigma(u)$.
\end{proof}


\section{A useful reformulation}

In this section we reformulate the description of $\MM(u, v)$
in terms of kernels of Toeplitz operators and Carleson measures for model spaces.

\begin{Theorem}\label{ppppooiuy}
For inner $u$ and $v$ and $\phi \in H^2$, the following are equivalent: 
\begin{enumerate}
\item[(i)] $\phi \in \MM(u, v)$;
\item[(ii)] $\phi S^*u \in \K_{v}$ and $|\phi|^2 dm$ is a Carleson measure for $\K_u$,  i.e., 
$$\int_{\T} |f|^2 |\phi|^2 dm \lesssim \|f\|^2, \quad f \in \K_u;$$
\item[(iii)] $\phi \in \ker T_{\overline{z v}u}$ and $|\phi|^2 dm$ is a Carleson measure for $\K_u$, where $T_{\overline{z v}u} f = P_{+}(\overline{z v} u f)$ is the standard Toeplitz operator on $H^2$.
\end{enumerate}
Furthermore, the following are equivalent:
\begin{enumerate}
\item[(iv)] $\phi \in \MM(u, v) \cap H^{\infty}$;
\item[(v)] $ \phi S^*u \in \K_{v} \cap H^{\infty}$.
\item[(vi)] $\phi \in \ker T_{\overline{z v}u}\cap H^{\infty}$.
\end{enumerate}
\end{Theorem}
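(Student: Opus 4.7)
The key observation is that $T_{\overline{zv}u}=T_{\overline{zv}}M_u$ on $H^2$ (since $u$ is analytic), and because $\ker T_{\overline{zv}}|_{H^2}=\K_{zv}$, condition (iii) is equivalent to $u\phi\in \K_{zv}$. My plan is therefore to prove (i)$\Leftrightarrow$(iii) directly, obtain (i)$\Leftrightarrow$(ii) by a short Crofoot normalization, and then extract (iv)--(vi).

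For (i)$\Rightarrow$(iii): the Carleson estimate is the standard closed-graph consequence already noted in Section~\ref{MM}. To show $u\phi\in\K_{zv}$, I would test against $zvg$ with $g\in H^2$ and use $u=u(0)+z\,S^*u$ to obtain $\langle u\phi,zvg\rangle = u(0)\,\langle \phi,zvg\rangle+\langle \phi\,S^*u,vg\rangle$. Since $S^*u\in\K_u$, the second term vanishes by (i). For the first, applying (i) to $k_0^u=1-\overline{u(0)}u\in\K_u$ gives $\langle\phi\,k_0^u,zvg\rangle=0$, which rearranges to $(1-|u(0)|^2)\langle\phi,zvg\rangle=0$, and $|u(0)|<1$ forces $\langle\phi,zvg\rangle=0$. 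For (iii)$\Rightarrow$(i), fix $f\in\K_u$; then $\overline{u}f\in\overline{zH^2}$, so there is $g\in H^2$ with $\|g\|=\|f\|$ and $\overline{u}f=\overline{zg}$ on $\T$. Hence $\overline{v}\phi f=\overline{zv}\,u\phi\cdot\overline{g}$, and (iii) lets me write $\overline{zv}\,u\phi=\overline{zh}$ for some $h\in H^2$, so $\overline{v}\phi f=\overline{zhg}$. The Carleson hypothesis forces $\phi f\in L^2$, whence $hg\in L^2\cap H^1=H^2$; thus $\overline{zhg}$ is supported in strictly negative Fourier indices, so $P_+(\overline{v}\phi f)=0$ and $\phi f\in\K_v$.

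For (i)$\Leftrightarrow$(ii), the forward direction is immediate ($S^*u\in\K_u$ plus closed graph). For the converse, I would normalize via Proposition~\ref{sdfjsldkfjldskf66666}(vi): set $\tilde u:=u_{u(0)}$ (so $\tilde u(0)=0$) and $\tilde\phi:=(1-\overline{u(0)}u)\phi$. A direct computation using $\tilde u/z=S^*u/(1-\overline{u(0)}u)$ gives $\tilde\phi\,S^*\tilde u=\phi\,S^*u$, and Carleson transfers between $\K_u$ and $\K_{\tilde u}$ because $|1-\overline{u(0)}u|$ is bounded above and below on $\T$. When $\tilde u(0)=0$, conditions (ii) and (iii) for $\tilde\phi$ collapse to a single condition $\tilde\phi\,\tilde u/z\in\K_v$, so (ii) for $\phi$ yields (iii) for $\tilde\phi$, hence (i) for $\tilde\phi$ by the previous paragraph, and the Crofoot correspondence returns (i) for $\phi$.

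Finally, (iv)$\Leftrightarrow$(v)$\Leftrightarrow$(vi) is the bounded analogue: for bounded $\phi$ the Carleson condition is automatic, and conversely $\phi\,S^*u\in H^\infty$ combined with the Crofoot reduction makes $S^*\tilde u=\tilde u/z$ inner, so $|\tilde\phi|=|\tilde\phi\,S^*\tilde u|$ a.e.\ on $\T$, which gives $\tilde\phi\in H^\infty$ and hence $\phi\in H^\infty$ (since $(1-\overline{u(0)}u)^{\pm 1}\in H^\infty$). The main obstacle I anticipate is precisely the case $u(0)\neq 0$: conditions (ii) and (iii) differ by a term $u(0)\,S^*\phi$ which need not lie in $\K_v$ on its own, and the Crofoot transform is the natural device that absorbs this discrepancy.
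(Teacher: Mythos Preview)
Your argument is correct, but your route to (ii)$\Leftrightarrow$(iii) is noticeably more elaborate than the paper's. The paper never invokes the Crofoot transform here: it simply writes, on $\T$,
\[
\phi\,S^*u=\phi\,\overline{z}\bigl(u-u(0)\bigr)=\phi\,\overline{z}\,u\,(1-u(0)\overline{u}),
\]
and observes that $T_{1-u(0)\overline{u}}$ is invertible on $H^2$ (being the adjoint of multiplication by the invertible $H^\infty$ function $1-\overline{u(0)}u$). Hence $\phi\,S^*u\in\K_v=\ker T_{\overline v}$ iff $T_{\overline{zv}u}\phi=0$, in one line. This same identity also handles (v)$\Rightarrow$(vi): if $\phi\,S^*u$ is bounded on $\T$ then so is $\phi$, and Smirnov upgrades this to $\phi\in H^\infty$. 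Your Crofoot normalization achieves the same end but is heavier machinery for what is a purely algebraic cancellation.

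On the other hand, your (iii)$\Rightarrow$(i) is a bit sharper than the paper's (ii)$\Rightarrow$(i). The paper argues by density: it shows $\phi g\in\K_v$ only for $g\in\K_u\cap H^\infty$ (checking $v\overline{z\phi g}=v\overline u\,\overline\phi\cdot u\overline z\,\overline g\in H^2\cdot H^\infty$), and then appeals to the Carleson bound to pass to the closure. Your argument works directly for every $f\in\K_u$ by the Smirnov inclusion $H^1\cap L^2\subset H^2$, which is a clean way to avoid the density step. So each approach has a local advantage: the paper's algebraic identity makes (ii)$\Leftrightarrow$(iii) immediate, while your factorization argument makes (iii)$\Rightarrow$(i) hold pointwise rather than via approximation.
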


\begin{proof}
Recall that $\ker T_{\overline{u}}=\K_u$ \cite[p.~108]{MSGMR} and that $T_{f g} = T_{f} T_{g}$ if either $\overline{f} \in H^{\infty}$ or $g \in H^{\infty}$ \cite[p.~97]{MSGMR}. Also observe that $T_{\overline{z}} = S^{*}$ and that $T_{1 - u(0) \overline{u}}$ is invertible. Using these facts, along with the identity (on $\T$), 
\begin{equation}\label{66sd11qaswqa}
\varphi S^*u=\varphi \overline{z}(u-u(0))=\varphi \overline{z} u (1 - u(0) \overline{u}),
\end{equation}
 it follows that 
$\phi S^{*} u \in \K_v \iff \phi \in \ker  T_{\overline{z v}u}.$
This yields $(ii) \iff (iii)$ and $(vi) \implies (v)$.
The implication $(v) \implies (vi)$ needs an additional argument. Indeed, suppose that
$ \phi S^*u \in \K_{v} \cap H^{\infty}$. Then the above equivalences yield 
$\phi \in \ker T_{\overline{z v}u}$, and we just have to check that $\phi$ is bounded.
We already know that $\phi\in H^2$. Thus  in order to verify $\phi \in H^{\infty}$, it suffices to
prove that $\phi|_{\T} \in L^{\infty}$ (Smirnov's theorem \cite[p.~28]{Duren}). By assumption, $\varphi S^*u=g\in H^{\infty}$ and \eqref{66sd11qaswqa} shows that $\phi|_{\T} \in L^{\infty}$. 

The implications $(i) \implies (ii)$ and  $(iv) \implies (v)$ are automatic. The implication $(v) \implies (iv)$ becomes automatic once we have shown 
$(ii) \implies (i)$.  So it remains to prove $(ii) \implies  (i)$. Observe that $f \in \K_v$ if and only if $v \overline{z f} \in \K_v$ (see \cite[p.~105]{MSGMR}). We know that $\varphi S^*u\in K_v$ which means, via \eqref{66sd11qaswqa} that $v\overline{u\varphi}\in H^2$. Since $|\phi|^2 dm$ is a Carleson measure for $\K_u$ (i.e., $\phi \in \MM(\K_u, H^2)$) it suffices to show that $\phi g \in \K_v$ for all $g \in \K_u \cap H^{\infty}$ (which is dense in $\K_u$). Indeed, 
$$v \overline{z \phi g} = v \overline{u} \overline{\phi} \cdot u \overline{z} \overline{g} \in H^2 \cdot H^{\infty} \subset H^2. \qedhere$$
\end{proof}

\begin{Corollary}\label{MMcont} 
$ \ker{T_{\overline{z v} u}} \cap H^{\infty} = \MM(u, v) \cap H^{\infty} \subset \MM(u,v)\subseteq \ker T_{\overline{z v}u}$.
\end{Corollary}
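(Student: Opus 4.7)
The statement of Corollary \ref{MMcont} packages together three assertions, each of which is a direct consequence of Theorem \ref{ppppooiuy}. My plan is simply to read off each containment from the appropriate equivalence, without doing any new work.

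For the equality $\ker T_{\overline{zv}u} \cap H^\infty = \MM(u,v) \cap H^\infty$, I would invoke the equivalence $(iv) \iff (vi)$ from Theorem \ref{ppppooiuy}: by definition a function $\phi$ lies in $\MM(u,v) \cap H^\infty$ exactly when condition $(iv)$ holds, and this was shown equivalent to $(vi)$, which is exactly membership in $\ker T_{\overline{zv}u} \cap H^\infty$.

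The middle containment $\MM(u,v) \cap H^\infty \subset \MM(u,v)$ requires no argument — it is a trivial set-theoretic inclusion. For the final containment $\MM(u,v) \subseteq \ker T_{\overline{zv}u}$, I would use the implication $(i) \implies (iii)$ of Theorem \ref{ppppooiuy}: if $\phi \in \MM(u,v)$, then in particular $\phi \in \ker T_{\overline{zv}u}$ (we simply discard the Carleson measure condition, which is the extra piece of information that would be needed to turn the containment into an equality).

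There is no genuine obstacle here, since every assertion is already contained in Theorem \ref{ppppooiuy}; the corollary is really a convenient restatement highlighting that $\MM(u,v)$ sits between $\ker T_{\overline{zv}u} \cap H^\infty$ and $\ker T_{\overline{zv}u}$, with the gap on the right governed precisely by whether $|\phi|^2 dm$ is a Carleson measure for $\K_u$. Thus the proof can be written in one or two sentences, essentially as \emph{``immediate from Theorem \ref{ppppooiuy}.''}
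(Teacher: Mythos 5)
Your proposal is correct and matches the paper's intent exactly: the corollary is stated without proof precisely because it is read off from Theorem \ref{ppppooiuy}, namely the equivalence $(iv)\iff(vi)$ for the left-hand equality and the implication $(i)\implies(iii)$ (discarding the Carleson condition) for the right-hand containment. Nothing further is needed.
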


We will see in Example \ref{1551618714499} below that, 
in general, $\MM(u,v) \subsetneq \ker T_{\overline{z v}u}$.

\begin{Corollary}\label{7ujmpokjn}
Suppose $u$ and $v$ are inner and $v = u I$. Then the following are equivalent:
\begin{enumerate}
\item[(i)] $\phi \in \MM(u, v)$;
\item[(ii)] $\phi \in \K_{z I}$ and $|\phi|^2 dm$ is a Carleson measure for $\K_u$.
\end{enumerate}
Furthermore, the following are equivalent: 
\begin{enumerate}
\item[(iii)] $\phi \in \MM(u, v) \cap H^{\infty}$;
\item[(iv)] $\phi \in \K_{z I} \cap H^{\infty}$.
\end{enumerate}
If $I$ is a finite Blaschke product then $\MM(u, v) \cap H^{\infty} = \MM(u, v) = \K_{z I}$.
\end{Corollary}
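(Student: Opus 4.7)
The plan is to derive everything from Theorem \ref{ppppooiuy} by simplifying the symbol of the Toeplitz operator when $v = uI$. On $\T$ we have $|u|=1$, so $\overline{zv}u = \overline{z}\,\overline{u}\,\overline{I}\,u = \overline{zI}$ almost everywhere. Combined with the standard identity $\ker T_{\overline{zI}} = \K_{zI}$ (the basic characterization of model spaces as kernels of Toeplitz operators conjugate to inner functions, used also at the start of the proof of Theorem \ref{ppppooiuy}), this immediately gives
$$\ker T_{\overline{zv}u} = \K_{zI}.$$
So applying the equivalence (i) $\iff$ (iii) of Theorem \ref{ppppooiuy} proves (i) $\iff$ (ii), and applying (iv) $\iff$ (vi) of the same theorem proves (iii) $\iff$ (iv).

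For the last statement, suppose now that $I$ is a finite Blaschke product. Then $zI$ is also a finite Blaschke product, so $\K_{zI}$ is finite dimensional, consists of rational functions whose poles lie outside $\overline{\D}$, and is therefore contained in $H^{\infty}$. In particular, $\K_{zI}\cap H^{\infty}=\K_{zI}$, so the equivalence of (iii) and (iv) above already gives
$$\MM(u,v)\cap H^{\infty} = \K_{zI}.$$
By Corollary \ref{MMcont} we also have $\MM(u,v)\subseteq \ker T_{\overline{zv}u} = \K_{zI}\subseteq H^{\infty}$, hence $\MM(u,v)=\MM(u,v)\cap H^{\infty}=\K_{zI}$.

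Finally, one should check that in this finite Blaschke case the Carleson condition in (ii) is automatic, so that every $\phi\in\K_{zI}$ really does multiply $\K_u$ into $\K_v$. This is immediate: since $\phi\in\K_{zI}\subseteq H^{\infty}$, we have $\int_{\T} |f|^2|\phi|^2\,dm \leq \|\phi\|_{\infty}^{2}\|f\|^{2}$ for all $f\in\K_u$, so $|\phi|^2 dm$ is trivially a Carleson measure for $\K_u$. There is no real obstacle here; the whole corollary is essentially a bookkeeping consequence of Theorem \ref{ppppooiuy} together with the observation that finite dimensional model spaces consist of bounded rational functions.
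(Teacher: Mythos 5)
Your proof is correct and is essentially the intended derivation: the paper states this as an immediate corollary of Theorem \ref{ppppooiuy}, resting on exactly the identification $\overline{zv}u=\overline{zI}$ on $\T$ and hence $\ker T_{\overline{zv}u}=\ker T_{\overline{zI}}=\K_{zI}$, with the finite Blaschke case following since $\K_{zI}$ then consists of bounded rational functions. Your closing remark about the Carleson condition being automatic for bounded $\phi$ is harmless but redundant, since the chain $\MM(u,v)\subseteq\K_{zI}=\K_{zI}\cap H^{\infty}=\MM(u,v)\cap H^{\infty}\subseteq\MM(u,v)$ already closes the argument.
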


Our next result uses analytic continuation and the boundary spectrum to construct a class of inner functions $u$ and $v$, with $v = u I$, such that the Carleson condition on $|\phi|^2 dm$ is automatic as soon as $\phi \in \K_{z I}$. 

\begin{Theorem}\label{11sdfsdhjf}
Let $u$ and $v$ be inner functions with and $v = u I$ for some inner function $I$. Suppose further that $\sigma(u) \cap \sigma(I) = \varnothing$. Then $\MM(u, v) = \K_{z I}$. Furthermore, if $I$ is not a finite Blaschke product then $\MM(u, v)$ contains unbounded functions. 
\end{Theorem}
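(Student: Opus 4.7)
The plan is to use Corollary \ref{7ujmpokjn} as the starting point, reducing the theorem to showing that under the hypothesis $\sigma(u)\cap\sigma(I)=\varnothing$, every $\phi\in \K_{zI}$ automatically satisfies the Carleson condition $\int_{\T}|f|^{2}|\phi|^{2}\,dm\lesssim\|f\|^{2}$ for $f\in\K_u$. The reverse inclusion $\MM(u,v)\subset\K_{zI}$ is then free.

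First I would observe that $\sigma(zI)=\sigma(I)$ (since $z$ contributes nothing to the boundary spectrum), so that functions in $\K_{zI}$ extend analytically across $\T\setminus\sigma(I)$. Because $\sigma(u)$ and $\sigma(I)$ are disjoint closed subsets of $\T$, I can choose an open arc-neighborhood $U\subset\T$ of $\sigma(I)$ whose closure remains disjoint from $\sigma(u)$, and split the integral into the pieces over $U$ and $\T\setminus U$.

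On $\T\setminus U$, a compactness argument combined with analytic continuation shows that $\phi$ is bounded by some constant $M$, contributing at most $M^{2}\|f\|^{2}$. On $\overline{U}$, every point is a regular boundary point for $u$, so the boundary reproducing kernel exists and satisfies $\|k^{u}_{\zeta}\|^{2}=|u'(\zeta)|$. Since $u$ extends analytically across the compact set $\overline{U}$, the quantity $|u'(\zeta)|$ is continuous and bounded there by some $C^{2}$, giving the uniform pointwise bound $|f(\zeta)|\leq C\|f\|$ for $\zeta\in\overline{U}$ and $f\in\K_u$. Consequently the integral over $U$ is bounded by $C^{2}\|f\|^{2}\|\phi\|_{H^{2}}^{2}$, and summing the two estimates yields the Carleson inequality, hence the equality $\MM(u,v)=\K_{zI}$.

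For the final assertion, if $I$ is not a finite Blaschke product then neither is $zI$; I would invoke the standard fact that $\K_{\theta}\subset H^{\infty}$ only if $\theta$ is a finite Blaschke product (the closed graph theorem applied to $\K_{\theta}\hookrightarrow H^{\infty}$ forces $(1-|\theta(\lambda)|^{2})/(1-|\lambda|^{2})$ to be bounded on $\D$, which in turn forces $\theta$ to have finite degree), producing unbounded elements of $\K_{zI}=\MM(u,v)$. The most delicate step is the uniform pointwise bound for $\K_u$-functions on $\overline{U}$; it rests on the continuity of the boundary reproducing kernel norm on $\T\setminus\sigma(u)$, which uses the analytic continuation of $u$ across $\overline{U}$ and the Julia--Carath\'eodory identity at regular boundary points.
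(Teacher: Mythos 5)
Your argument is correct. For the equality $\MM(u,v)=\K_{zI}$ it follows the same strategy as the paper: reduce via Corollary \ref{7ujmpokjn} to checking that $|\phi|^2\,dm$ is a Carleson measure for $\K_u$, then exploit the separation of the two closed sets $\sigma(u)$ and $\sigma(I)$ by splitting $\T$ into a neighbourhood of $\sigma(I)$ (where $\phi$ may be large but every $f\in\K_u$ continues analytically) and its complement (where $\phi$ continues analytically and is hence bounded). The difference is in how the uniformity is obtained: the paper merely notes that $\phi f\in H^2$ for each $f$ and then invokes the closed graph theorem to produce the Carleson constant, whereas you make the estimate quantitative by bounding $\sup_{\zeta\in\overline{U}}|f(\zeta)|\leq \|f\|\cdot\sup_{\overline{U}}|u'|^{1/2}$ through the boundary reproducing kernels $k^u_\zeta$ at regular points, which gives $\int_\T|f|^2|\phi|^2\,dm\lesssim\|f\|^2$ directly. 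This is a slightly more self-contained route, trading the closed graph theorem for the Ahern--Clark identity $\|k^u_\zeta\|^2=|u'(\zeta)|$ and the continuity of $|u'|$ on the compact arc $\overline{U}$ (both legitimate at points off $\sigma(u)$). For the final assertion the two proofs genuinely diverge: the paper cites Grothendieck's theorem that an infinite-dimensional closed subspace of $L^2$ cannot be contained in $L^\infty$, while you argue that a bounded embedding $\K_{zI}\hookrightarrow H^\infty$ would force $\sup_{\lambda\in\D}(1-|zI(\lambda)|^2)/(1-|\lambda|^2)<\infty$, hence $|zI(\lambda)|\to 1$ uniformly as $|\lambda|\to 1$, hence $zI$ a finite Blaschke product --- a more elementary and equally valid argument. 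Both versions of the proof are sound.
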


\begin{proof}
By Corollary \ref{7ujmpokjn}, we just need to check that $|\phi|^2 dm$ is a Carleson measure for $\K_u$ for every $\phi \in \K_{z I}$.  Let $V$ be a two dimensional neighborhood of $\sigma(I)$ that is far from $\sigma(u)$. By \cite[p.~153]{MSGMR} $\phi$ extends analytically outside $V$ (i.e., $\overline \D\setminus V$) and thus can be assumed to be bounded outside $V$. Similarly, every $f \in \K_u$ extends analytically to $V$ and can be assumed to be bounded there. From here it follows that $\phi f \in H^2$. 
By the Closed Graph Theorem, $\phi \in \MM(\K_u, H^2)$, equivalently, $|\phi|^2 dm$ is a Carleson measure for $\K_u$. 

For the last part, note that if $I$ is not a finite Blaschke product then $\K_{z I}$ is infinite dimensional \cite[p.~117]{MSGMR} and thus, via a well-known theorem of Grothendieck \cite{Rudin-FA},  contains unbounded functions. 
\end{proof}

We now construct an example of when $\ker T_{zI}=\ker T_{\overline{zv}u}$ contains 
functions which do not define Carleson measures for $\K_u$ and thus 
$\MM(u, v) \subsetneq \ker{T_{\overline{z v} u}}$. Hence the Carleson condition is important in Theorem \ref{ppppooiuy}.


\begin{Example}\label{1551618714499}
Set $\lambda_n = 1 - 2^{-n}$, $n \geqslant 1$, and note this is the zero sequence of an interpolating Blaschke product $I$. With $w_n = n^{-1}$, notice that $\sum_{n \geqslant 1} w_{n}^{2} < \infty$. By an interpolation theorem from \cite[p.~135]{Nik}, there is a $\phi \in \K_{I} \subseteq \K_{z I}$ such that 
$$
\phi(\lambda_n) = \frac{w_n}{(1 - |\lambda_n|^2)^{1/2}} \asymp \frac{2^{n/2}}{n} \to \infty.
$$ Now take $u(z) = \exp((z+1)/(z-1))$ and observe that since $\lambda_n \to 1$ on $(0, 1)$ we have 
$u(\lambda_n) \to 0$.
If $v = u I$ then
$\phi \in \K_I \subseteq \K_{z I} = \ker{T_{\overline{z v} u}}.$
However, $\phi \not \in \MM(u, v)$ since, if it were, \eqref{8whhsuyshffsdf} would imply that
$$|\phi(\lambda_n)|^2 (1 - |u(\lambda_n)|^2) \lesssim 1 - |v(\lambda_n)|^2 \lesssim 1.$$
The above discussion now yields a contradiction. Thus we have 
$\MM(u, v) \subsetneq \ker{T_{\overline{z v} u}} = \K_{z I}$.
\end{Example}
 
 \section{Finite dimensional case}

 We now consider finite dimensional model spaces.
For an inner $u$, the {\em degree} of $u$ is $n$ if $u$ is a finite Blaschke product with $n$ zeros  and equal to $\infty$ otherwise.
When $u$ is a finite Blaschke product with $n$ zeros $\{\lambda_1, \cdots, \lambda_n\}$, we have
\begin{equation}\label{sdkfhsdf53e0}
\K_u = \Big\{\frac{p(z)}{\prod_{j = 1}^{n} (1 - \overline{\lambda_j} z)}: p \in \mathcal{P}_{n - 1}\Big\},
\end{equation}
where $\mathcal{P}_{n - 1}$ are the polynomials of degree at most $n - 1$. 

\begin{Theorem}\label{7786ettetete}
If $u$ is a finite Blaschke product with zeros $\{a_1, \ldots, a_m\}$ and $v$ is a finite Blaschke product with zeros $\{b_1, \ldots, b_n\}$ where $m \leqslant n$, and the zeros are repeated according to their multiplicity, then 
$$\MM(u, v) = \MM(u, v) \cap H^{\infty} = \Big\{q(z) \frac{\prod_{i = 1}^{m} (1 - \overline{a_i} z)}{\prod_{j = 1}^{n} (1 - \overline{b_j} z)}: q \in \mathcal{P}_{n -  m}\Big\}.$$
\end{Theorem}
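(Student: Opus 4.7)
The plan is to use the very explicit description of finite-dimensional model spaces given in \eqref{sdkfhsdf53e0}, which reduces the whole statement to a polynomial degree computation. Assume $m \geqslant 1$ (the case $m = 0$ is degenerate, since then $\K_u = \{0\}$).

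\textbf{Forward inclusion.} Let $\phi \in \MM(u,v)$. By \eqref{sdkfhsdf53e0} applied to $\K_u$, the constant function $1$ corresponds to the polynomial $p = 1 \in \mathcal{P}_{m-1}$, so $f_0(z) := 1/\prod_{i=1}^m(1-\overline{a_i}z) \in \K_u$. Since $\phi f_0 \in \K_v$, the description \eqref{sdkfhsdf53e0} for $\K_v$ produces a polynomial $q \in \mathcal{P}_{n-1}$ with
\[
\phi(z) \cdot \frac{1}{\prod_{i=1}^m(1-\overline{a_i}z)} = \frac{q(z)}{\prod_{j=1}^n(1-\overline{b_j}z)},
\]
so $\phi$ has the claimed rational form with some $q \in \mathcal{P}_{n-1}$. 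To upgrade $q \in \mathcal{P}_{n-1}$ to $q \in \mathcal{P}_{n-m}$, I apply $\phi$ to the ``top'' element $f_1(z) := z^{m-1}/\prod_{i=1}^m(1-\overline{a_i}z) \in \K_u$. Then $\phi f_1 = z^{m-1}q(z)/\prod_{j=1}^n(1-\overline{b_j}z)$, which lies in $\K_v$ only if $z^{m-1}q(z) \in \mathcal{P}_{n-1}$, forcing $\deg q \leqslant n-m$.

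\textbf{Reverse inclusion and boundedness.} Conversely, take any $q \in \mathcal{P}_{n-m}$ and set
\[
\phi(z) = q(z)\,\frac{\prod_{i=1}^m(1-\overline{a_i}z)}{\prod_{j=1}^n(1-\overline{b_j}z)}.
\]
For any $f(z) = p(z)/\prod_{i=1}^m(1-\overline{a_i}z) \in \K_u$ with $p \in \mathcal{P}_{m-1}$, a cancellation gives
\[
\phi(z)f(z) = \frac{q(z)p(z)}{\prod_{j=1}^n(1-\overline{b_j}z)},
\]
and since $\deg(qp) \leqslant (n-m)+(m-1) = n-1$, we have $qp \in \mathcal{P}_{n-1}$, so $\phi f \in \K_v$ by \eqref{sdkfhsdf53e0}. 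Thus $\phi \in \MM(u,v)$. Finally, $\phi$ is rational and its only possible poles are at the points $1/\overline{b_j}$, all of which lie outside $\overline{\D}$, so $\phi \in H^{\infty}$. This gives the chain $\MM(u,v) \cap H^{\infty} \subseteq \MM(u,v) \subseteq \{\cdot\} \subseteq \MM(u,v) \cap H^{\infty}$, proving equality throughout.

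\textbf{Anticipated obstacle.} There isn't really one: the key leverage is the explicit parametrisation \eqref{sdkfhsdf53e0}, which turns the abstract condition $\phi \K_u \subseteq \K_v$ into an elementary statement about polynomial multiplication. The only subtle point is remembering to test $\phi$ against the highest-degree generator $z^{m-1}/\prod(1-\overline{a_i}z)$ to sharpen the degree bound from $n-1$ to $n-m$; without this second test one only obtains the weaker bound $q \in \mathcal{P}_{n-1}$.
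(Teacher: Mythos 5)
Your proof is correct, and the reverse inclusion (together with the boundedness of the resulting rational functions) is the same elementary computation the paper has in mind. Where you genuinely diverge is in the forward inclusion: the paper routes it through Theorem \ref{ppppooiuy}, deducing $\phi\in\ker T_{\overline{zv}u}$, i.e. $u\phi\in\K_{zv}=\bigl\{p/\prod_{j}(1-\overline{b_j}z):p\in\mathcal{P}_{n}\bigr\}\subseteq H^{\infty}$, and then lets analyticity of $\phi$ force $p$ to be divisible by $\prod_i(z-a_i)$, which simultaneously yields the rational form and the membership in $H^\infty$. You instead test $\phi$ directly against the two explicit elements $1/\prod_i(1-\overline{a_i}z)$ and $z^{m-1}/\prod_i(1-\overline{a_i}z)$ of $\K_u$: the first pins down the rational shape of $\phi$ with $q\in\mathcal{P}_{n-1}$, and the second sharpens the degree bound to $q\in\mathcal{P}_{n-m}$ (using that the representation $r/\prod_j(1-\overline{b_j}z)$ of an element of $\K_v$ determines $r$ uniquely, so $z^{m-1}q$ must itself lie in $\mathcal{P}_{n-1}$). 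Your route is more self-contained — it never invokes the Toeplitz-kernel characterization — at the cost of needing the two-step degree bookkeeping you flag; the paper's route is shorter once Theorem \ref{ppppooiuy} is available and makes the boundedness conclusion immediate from $\K_{zv}\subseteq H^\infty$. Your explicit exclusion of the degenerate case $m=0$ (where $\K_u=\{0\}$ and $\MM(u,v)$ is all of $\operatorname{Hol}(\D)$) is a point the paper leaves implicit.
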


\begin{proof}
The  $\supseteq$ containment follows essentially from \eqref{sdkfhsdf53e0}.
For the $\subseteq$ containment, notice from Theorem \ref{ppppooiuy} that $\phi \in \MM(u, v) \implies \phi \in \ker{T_{\overline{z v} u}}$ which is equivalent to
 $$u \phi \in \ker{T_{\overline{z v}} = \K_{z v}} = \Big\{\frac{p(z)}{\prod_{j = 1}^{n} (1 - \overline{b_j} z)}: p \in \mathcal{P}_{n}\Big\}\subseteq H^{\infty}.$$ The result now follows. 
\end{proof}

\begin{Theorem}\label{thm3.21}
If $u$ is a finite Blaschke product and $v$ is any inner function with infinite degree, then $\MM(u, v) \cap H^{\infty} \not = \{0\}$.
\end{Theorem}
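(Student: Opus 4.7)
The plan is to reduce the theorem to the special case $u(z) = z^m$ and then dispose of that case by a dimension count in $\K_v$. For the reduction, let $u$ be a finite Blaschke product of degree $m$ with zeros $a_1, \ldots, a_m$ (repeated by multiplicity) and set $P(z) := \prod_{j=1}^m (1 - \overline{a_j} z)$. Then $P \in H^\infty$ (its roots lie outside $\overline{\D}$), and \eqref{sdkfhsdf53e0} provides the basis $\{z^k/P : 0 \leq k \leq m-1\}$ of $\K_u$. Given a nonzero bounded $g \in \MM(z^m, v)$, the function $\phi := Pg$ is a nonzero element of $H^\infty$ satisfying $\phi \cdot (z^k/P) = gz^k \in g\K_{z^m} \subseteq \K_v$ for each $k$, so $\phi \in \MM(u, v) \cap H^\infty$. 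Thus it suffices to settle the case $u = z^m$.

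For that case, $\K_u = \mathcal{P}_{m-1}$, and $\phi \in \MM(z^m, v)$ precisely when $z^k \phi \in \K_v$ for $k = 0, 1, \ldots, m-1$. Using the standard characterization that $f \in \K_v$ is equivalent to $f \in H^2$ together with $f\overline{v} \in \overline{zH^2}$, the condition $z^k\phi \in \K_v$ becomes $\phi\overline{v} \in \overline{z^{k+1} H^2}$; these sets are nested, so all $m$ of them amalgamate into the single requirement $\phi\overline{v} \in \overline{z^m H^2}$. Beyond the $k=0$ case (which says $\phi \in \K_v$), this is equivalent to the $m-1$ orthogonality relations
\[
\widehat{\phi\overline{v}}(-j) \;=\; \langle \phi, S^{*j} v\rangle \;=\; 0, \qquad j = 1, \ldots, m-1,
\]
in which each $S^{*j} v$ lies in $\K_v$. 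Hence $\MM(z^m, v)$ equals the kernel $V$ in $\K_v$ of the bounded linear map $L : \K_v \to \C^{m-1}$ given by $L(\phi) := (\langle \phi, S^{*j} v\rangle)_{j=1}^{m-1}$, a closed subspace of codimension at most $m-1$.

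To produce a nonzero bounded element of $V$, I use that since $v$ has infinite degree, $\K_v$ is infinite-dimensional, and so is $\K_v \cap H^\infty$: the latter contains the linearly independent bounded reproducing kernels $k_{\lambda_n}^v$ for any countable set of distinct points $\lambda_n \in \D$. The restriction $L|_{\K_v \cap H^\infty}$ is a linear map from this infinite-dimensional space into the finite-dimensional $\C^{m-1}$, so its kernel $V \cap H^\infty = \MM(z^m, v) \cap H^\infty$ is infinite-dimensional, hence certainly nonzero. I do not foresee any real obstacle; the only slightly delicate step is the Fourier-coefficient bookkeeping that collapses the $m$ membership conditions $z^k\phi \in \K_v$ into the single condition $\phi\overline{v} \in \overline{z^m H^2}$ and thence into the stated orthogonality relations.
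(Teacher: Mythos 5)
Your proof is correct, and it takes a genuinely different route from the paper's. The paper first applies a Frostman shift so that $v_a$ becomes an infinite Blaschke product, splits off a finite sub-product $I$ of the same degree as $u$, invokes the explicit rational description of $\MM(u,I)$ from Theorem \ref{7786ettetete} to get a bounded rational $\phi$ with $\phi\K_u\subset\K_I\subset\K_{v_a}$, and then transports back to $\K_v$ via the Crofoot transform of Proposition \ref{sdfjsldkfjldskf66666}~(vi). You instead clear denominators to reduce to $u=z^m$ (legitimate, since $\{z^k/P\}_{k=0}^{m-1}$ is a basis of the finite-dimensional $\K_u$ and $P\in H^\infty$), and then identify $\MM(z^m,v)$ as the intersection of $\K_v$ with the kernels of the $m-1$ bounded functionals $\phi\mapsto\langle\phi,S^{*j}v\rangle$; your Fourier bookkeeping collapsing $z^k\phi\in\K_v$ for $k=0,\dots,m-1$ into $\phi\in H^2$ and $\phi\overline v\in\overline{z^mH^2}$ is accurate. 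The codimension count then applies on $\K_v\cap H^\infty$, which is indeed infinite-dimensional (your kernel argument works: a nontrivial dependence among $k_{\lambda_1}^v,\dots,k_{\lambda_N}^v$ would force $v$ to be rational of degree $<N$; alternatively one can quote the density of $\K_v\cap H^\infty$ in $\K_v$). What your approach buys is more than the statement asks: it avoids Frostman's theorem entirely and shows that $\MM(u,v)\cap H^\infty$ is infinite-dimensional, and it gives the clean structural description of $\MM(z^m,v)$ as a closed subspace of $\K_v$ of codimension at most $m-1$. What the paper's approach buys is an explicit, concrete rational multiplier, and a template (Frostman shift plus Crofoot transform) that the authors reuse elsewhere.
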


\begin{proof}
By \cite[p.~75]{Garnett} there is an $a \in \D$ (in fact ``most'' $a$) such that the Frostman shift $v_a = \frac{v - a}{1 - \overline{a} v}$ of $v$  is a Blaschke product of infinite degree. Factor $v_a = I J$, where $I$ and $J$ are Blaschke products with the degree of $I$ equal to the degree of $u$, and use 
\cite[p.~14]{Nik} to obtain $\K_I \subset \K_{v_a}$.
From Theorem \ref{7786ettetete} there is a rational $\phi \in H^{\infty}$ such that 
$\phi \K_u \subset \K_I \subset \K_{v_a}$. Proposition \ref{sdfjsldkfjldskf66666} (vi) now yields
$(1 - \overline{a} v) \phi \K_u \subset \K_v$.
\end{proof}


\section{Sub-level sets}\label{SubLev}

In this section we discuss some results using sub-level sets of inner functions.
We start with a  ``maximum principle'' result of Cohn \cite{MR959220}.

\begin{Theorem}\label{dfdfsdfsdfk}
Suppose $\Theta$ is inner and $f \in \K_{\Theta}$  is bounded on $\{|\Theta| < \epsilon\}$ for some $\epsilon \in (0, 1)$. Then $f \in H^{\infty}.$
\end{Theorem}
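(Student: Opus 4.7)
My plan is to reduce the problem to showing that $f$ has essentially bounded boundary values on $\T$. Indeed, since $f \in \K_\Theta \subseteq H^2$, Smirnov's theorem (cited earlier in the paper at \cite[p.~28]{Duren}) tells us that if the nontangential boundary values $f^*$ satisfy $|f^*| \in L^\infty(\T)$, then $f \in H^\infty$. So the entire task becomes propagating the bound $|f| \leq M$ from the set $\{|\Theta| < \epsilon\}$ out to all of $\T$.

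The key structural tool I would use is the conjugation $C \colon \K_\Theta \to \K_\Theta$ given on $\T$ by $(Cf)(\zeta) = \bar{\zeta} \Theta(\zeta) \overline{f(\zeta)}$, which is an antilinear isometric involution satisfying $|Cf| = |f|$ a.e.\ on $\T$. Setting $h := Cf \in \K_\Theta$, the boundary identity $f = \bar{z}\bar{h}\Theta$ on $\T$ gives $fh \in H^1$ with $|fh| = |f|^2$ a.e.\ on $\T$. Thus it is enough to prove that $F := fh$ lies in $H^\infty$: indeed, boundedness of $|fh|$ on $\T$ immediately yields $|f^*| \in L^\infty(\T)$, and hence $f \in H^\infty$ by Smirnov. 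On the set $\{|\Theta(z)| < \epsilon\}$ we already control one factor through $|f(z)| \leq M$; on the complementary set $\{|\Theta(z)| \geq \epsilon\}$, the function $\Theta$ is bounded away from $0$, and the boundary identity $f = \bar{z}\bar{h}\Theta$ together with the symmetric role of $h$ suggests a parallel estimate.

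The main obstacle is the transfer of boundedness from $\{|\Theta| < \epsilon\}$ to the complementary set $\{|\Theta| \geq \epsilon\}$; one cannot invoke a naive maximum principle because the "outer" boundary of this set is the unit circle, and one has no a priori control on $f$ there. The argument I would attempt is a Phragmén--Lindelöf / subharmonic argument applied to $\log|f| + c \log|\Theta|$ (which is subharmonic on $\D$ since $\log|\Theta|$ is subharmonic and nonpositive), using that on $\{|\Theta| = \epsilon\}$ this is $\leq \log M + c\log\epsilon$, while on $\T$ one has $\log|\Theta| = 0$ a.e. Combined with the observation that every $f \in \K_\Theta$ continues analytically across $\T \setminus \sigma(\Theta)$ (so local boundedness is automatic at regular points), and a careful treatment near $\sigma(\Theta)$ using that $\varliminf_{z \to \zeta}|\Theta(z)| = 0$ forces the sub-level set $\{|\Theta| < \epsilon\}$ to approach each such $\zeta$, one should be able to close the estimate and conclude $f \in H^\infty$.
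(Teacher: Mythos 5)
Your opening reduction is fine: since $f\in\K_{\Theta}\subset H^2$ lies in the Smirnov class, it suffices to bound the a.e.\ boundary values $|f^*|$ on $\T$. But from there on the proposal is a plan rather than a proof, and the plan is circular at exactly the point you yourself flag as ``the main obstacle''. On the region $\{|\Theta|\ge\epsilon\}$ the subharmonic function $\log|f|+c\log|\Theta|$ has boundary data $\le\log M+c\log\epsilon$ on the level set $\{|\Theta|=\epsilon\}$, but on the portion of the boundary lying on $\T$ its a.e.\ boundary values are just $\log|f^*|$ (since $|\Theta|=1$ a.e.\ there) --- which is precisely the quantity you are trying to bound. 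No choice of $c$ changes this, so a Phragm\'en--Lindel\"of/two-constants argument cannot close without some a priori control on the $\T$-part of the boundary, and the hypotheses give none (the crude growth bound $|f(z)|\lesssim(1-|z|)^{-1/2}$ coming from the reproducing kernel is not by itself enough). The auxiliary remarks do not repair this: analytic continuation across $\T\setminus\sigma(\Theta)$ gives only local, not uniform, boundedness, and gives nothing when $\sigma(\Theta)=\T$ (e.g.\ a Blaschke product whose zeros accumulate at every point of $\T$, which is the typical case of interest); and the fact that $\{|\Theta|<\epsilon\}$ clusters at each point of $\sigma(\Theta)$ is an observation, not an estimate.

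The conjugation step is likewise not progress. With $h=Cf=\bar z\Theta\bar f$ one does get $fh\in H^1$ and $|fh|=|f|^2$ a.e.\ on $\T$, but to bound $fh$ on the sublevel set you would need a bound on $h$ there, and the hypothesis bounds $f$, not $h=Cf$; moreover $f=\bar z\Theta\bar h$ is only an identity between boundary functions on $\T$, so interior control of $f$ on $\{|\Theta|<\epsilon\}$ cannot be converted into interior control of $h$ --- the ``symmetric role of $h$'' is wishful at this point. Note also that the paper does not prove this statement at all: it quotes it as Cohn's maximum principle \cite{MR959220}, whose proof is genuinely more delicate than a direct maximum-principle argument (that is what makes the result nontrivial). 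So as written the proposal is missing its key step; to turn it into a proof you would have to supply the harmonic-estimation argument on $\{|\Theta|>\epsilon\}$ that replaces the unknown boundary data on $\T$ by something quantitative, or simply cite Cohn as the paper does.
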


This result can be used to show that under certain circumstances, all multipliers must be bounded. 

\begin{Corollary}\label{CbWtW}
Let $u$ and $v$ be inner. If, for some $\epsilon_1, \epsilon_2 \in (0, 1)$, $\{|v| < \epsilon_2\} \subseteq \{|u| < \epsilon_1\}$, then 
$\MM(u, v) = \ker T_{\overline{z v} u} \cap H^{\infty}$.
\end{Corollary}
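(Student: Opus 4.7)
The plan is to reduce, via Corollary \ref{MMcont}, the statement to showing that every element of $\MM(u,v)$ is bounded, and then to combine Cohn's maximum principle (Theorem \ref{dfdfsdfsdfk}) with the pointwise estimate \eqref{8whhsuyshffsdf} and the reformulation in Theorem \ref{ppppooiuy}.

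First, Corollary \ref{MMcont} already gives the chain
$$\ker T_{\overline{z v} u} \cap H^{\infty} \;=\; \MM(u, v) \cap H^{\infty} \;\subseteq\; \MM(u,v) \;\subseteq\; \ker T_{\overline{z v}u},$$
so the only missing piece is $\MM(u,v) \subseteq H^{\infty}$. Fix $\phi \in \MM(u,v)$. By the implication (i) $\Rightarrow$ (ii) of Theorem \ref{ppppooiuy}, $\phi S^{*}u \in \K_v$, and by the equivalence (v) $\iff$ (iv) it suffices to prove $\phi S^{*}u \in H^{\infty}$, for then $\phi \in \MM(u,v) \cap H^{\infty}$.

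To get this, I will apply Cohn's Theorem \ref{dfdfsdfsdfk} to $f = \phi S^{*}u \in \K_v$ with inner function $v$ and constant $\epsilon_2$, so the task becomes checking that $\phi S^{*}u$ is bounded on $\{|v| < \epsilon_2\}$. Note that $S^{*}u = (u - u(0))/z$ belongs to $H^{\infty}$, since on $\T$ one has $|S^{*}u| \leq |u| + |u(0)| \leq 2$ a.e., which forces $\|S^{*}u\|_{\infty} \leq 2$ in $\D$ by Smirnov's theorem. Hence it remains only to bound $\phi$ itself on $\{|v| < \epsilon_2\}$. For this I invoke the pointwise inequality \eqref{8whhsuyshffsdf}, valid for any $\phi \in \MM(u,v)$. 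The hypothesis $\{|v| < \epsilon_2\} \subseteq \{|u| < \epsilon_1\}$ gives $1 - |u(\lambda)|^2 \geqslant 1 - \epsilon_1^2 > 0$ on $\{|v|<\epsilon_2\}$, whence
$$|\phi(\lambda)|^2 \;\lesssim\; \frac{1 - |v(\lambda)|^2}{1 - |u(\lambda)|^2} \;\leqslant\; \frac{1}{1 - \epsilon_1^2}, \qquad \lambda \in \{|v| < \epsilon_2\}.$$
Thus $\phi S^{*}u$ is bounded on $\{|v| < \epsilon_2\}$; Cohn's theorem promotes this to $\phi S^{*}u \in H^{\infty}$, and Theorem \ref{ppppooiuy} then yields $\phi \in \MM(u,v) \cap H^{\infty}$.

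There is no serious obstacle here: the entire argument is a clean application of the tools already assembled. The only thing to notice is that the sub-level set hypothesis is precisely engineered so that the pointwise bound from the reproducing kernel formula for $M_\phi^*$ upgrades to boundedness of $\phi$ on a sub-level set of $v$, which is exactly what Cohn's maximum principle demands of a function in $\K_v$ in order to conclude membership in $H^{\infty}$.
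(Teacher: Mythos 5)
Your proof is correct and follows essentially the same route as the paper: use \eqref{8whhsuyshffsdf} with the sub-level set hypothesis to bound $\phi$ on $\{|v|<\epsilon_2\}$, transfer this to an element of $\K_v$ obtained by multiplying $\phi$ by a bounded function, apply Cohn's Theorem \ref{dfdfsdfsdfk}, and finish with Corollary \ref{MMcont}. The only cosmetic difference is that the paper uses $k_0^u\phi$ (with $k_0^u$ invertible in $H^\infty$) where you use $\phi S^*u$ together with the equivalence $(v)\iff(iv)$ of Theorem \ref{ppppooiuy}, which amounts to the same thing.
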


\begin{proof}
Let $\phi \in \MM(u, v)$.  The estimate in \eqref{8whhsuyshffsdf} says that when $\lambda \in \{|v| < \epsilon_2\} \subseteq \{|u| < \epsilon_1\}$ we have
$|\phi(\lambda)|^2 \lesssim (1 - \epsilon_{1}^{2})^{-1}$ and thus $\phi$ is bounded on $\{|v| < \epsilon_2\}$. Since $k^{u}_0 = 1 - \overline{u(0)} u \in \K_u$ and bounded on $\D$, we see that $k^{u}_0 \phi \in \K_v$ and bounded on $\{|v| < \epsilon_2\}$. Apply Theorem \ref{dfdfsdfsdfk} to obtain $k^{u}_0 \phi \in H^{\infty}$. Since $k^{u}_0$ is invertible in $H^{\infty}$, we get $\phi \in H^{\infty}$. Now apply Corollary \ref{MMcont}. 
\end{proof}

\begin{Example}\label{example-inner-function-multiplier-bouded}
Let $u$ be any singular inner function and $v = u^{\alpha}$ for some $\alpha > 1$ (or perhaps $u$ a Blaschke product, or any inner function, and $\alpha \in \N$). Notice that $u$ divides $v$ and so $\MM(u, v) \not = \{0\}$ (Corollary \ref{7ujmpokjn}). 
Furthermore if $\epsilon_2 \in (0, 1)$ and $z \in \{|v| < \epsilon_2\}$ then 
$|u(z)|^{1/\alpha} \leq \epsilon_{2}^{1/\alpha}.$
Setting $\epsilon_1 = \epsilon_{2}^{1/\alpha}$ we see that $\{|v| < \epsilon_2\} \subset \{|u| < \epsilon_1\}$.
Corollary \ref{CbWtW} yields $\MM(u, v) \subseteq H^{\infty}$. Combine this with Corollary \ref{7ujmpokjn} to see that
$\MM(u, v) = \K_{z u^{\alpha - 1}} \cap H^{\infty}.$ \end{Example}

Carleson measure results of Cohn \cite{MR705235, MR816238} allow us, in the special case where $u$ satisfies the connected level set condition (i.e., $\{|u| < \varepsilon\}$ is connected for some $\varepsilon > 0$), to replace the condition that $|\phi|^2 dm$ is a Carleson measure in Theorem \ref{ppppooiuy} and Corollary \ref{7ujmpokjn} with 
$$
\sup_{\lambda \in \D} (1 - |u(\lambda)|^2)  \int_{\T} \frac{1 - |\lambda|^2}{|\xi - \lambda|^2} |\phi(\xi)|^2 dm(\xi) < \infty.
$$

\section{The upper-half plane}\label{Sec-UHP}\label{seven}

We will now turn to the upper half plane which in certain situations is a
more appropriate setting. If $\C_{+}$ denotes the upper-half plane, we set 
$\mathscr{H}^{2} $
to be the corresponding Hardy space. 
There is a natural unitary operator $\mathcal{U}$  from $H^2$ onto $\mathscr{H}^2$ given by 
$$(\mathcal{U} f)(z) := \frac{1}{\sqrt{\pi} (z + i)} f(\omega(z)),$$
where 
$
\omega(z) := \frac{z - i}{z + i}
$
 maps $\C_{+}$ onto $\D$ and  $\R \cup\{-\infty, \infty\}$ onto $\T$. 
As with $H^2$, one can define, for $\Psi \in L^{\infty}(\R)$, the Toeplitz operator 
$T_{\Psi}$ on $\mathscr{H}^2$. 

For an inner function $U$ on $\C_{+}$,
we define the model space 
$$
\mathscr{K}_{U} := \mathscr{H}^2 \cap (U \mathscr{H}^2)^{\perp}
.$$
 The corresponding reproducing kernel function for $\mathscr{K}_{U}$ is 
$$K_{\lambda}^{U}(z) := \frac{i}{2 \pi} \frac{1 - \overline{U(\lambda)} U(z)}{z - \overline{\lambda}}, \quad \lambda, z \in \C_{+}.$$ Note that if $u$ is an inner function on $\D$ and $U = u \circ \omega$, then $U$ is an inner function on  $\C_{+}$ (and vice versa). Furthermore,  
$
 \mathcal{U} \K_u = \mathscr{K}_{U}.
$

\subsection*{Multipliers and kernels} 

In this subsection we need the elementary Blaschke factor on $\C_{+}$ with zero at $i$: 
$$b_{i}^{+}(z) := \frac{z - i}{z + i},$$
and 
$$k_i(z)=\frac{1}{\sqrt{\pi}}\frac{1}{z+i},$$
the corresponding  kernel at $i$. Observe that $\mU f=k_i\times (f\circ\omega), f\in H^2$.

We begin with some elementary but useful facts. The proofs are straightforward. 




\begin{Lemma}\label{L1ker}
Let $\psi\in L^{\infty}(\T)$ and $\Psi=\psi\circ \omega$.
Then $$f\in \ker T_{\psi} \iff F:=\mathcal{U} f
\in \ker T_{\Psi}.$$
\end{Lemma}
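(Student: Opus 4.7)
The plan is to extend $\mathcal{U}$ from the Hardy spaces to the ambient $L^2$ spaces by the same formula, then show that this extension intertwines both multiplication by the symbol and the Szegő projection; once that is done, the kernel equivalence is essentially automatic.

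First, I would define $\widetilde{\mathcal{U}} : L^2(\T, m) \to L^2(\R, dx)$ by $(\widetilde{\mathcal{U}}f)(x) = k_i(x) f(\omega(x))$ and check that it is a unitary extension of $\mathcal{U}$. The change of variables $\xi = \omega(x)$ gives $dm(\xi) = \frac{dx}{\pi |x+i|^2}$, and the factor $k_i(x) = \frac{1}{\sqrt{\pi}(x+i)}$ has been chosen precisely so that
$$\int_\T |f|^2\, dm = \int_\R \frac{|f(\omega(x))|^2}{\pi|x+i|^2}\, dx = \int_\R |\widetilde{\mathcal{U}}f|^2\, dx.$$
Thus $\widetilde{\mathcal{U}}$ is isometric, and its image contains $\mathscr{H}^2 = \widetilde{\mathcal{U}}(H^2)$ together with $\widetilde{\mathcal{U}}((H^2)^\perp)$; surjectivity follows from surjectivity of the change of variables.

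Next, I would check the two intertwining identities on $L^2(\T)$. Since $\widetilde{\mathcal{U}}$ only multiplies by a fixed factor and composes with $\omega$, for any $\psi \in L^\infty(\T)$,
$$\widetilde{\mathcal{U}}(\psi f)(x) = k_i(x)\, \psi(\omega(x))\, f(\omega(x)) = \Psi(x)\, (\widetilde{\mathcal{U}}f)(x),$$
so $\widetilde{\mathcal{U}} M_\psi = M_\Psi \widetilde{\mathcal{U}}$. Because $\widetilde{\mathcal{U}}$ is unitary and sends $H^2$ onto $\mathscr{H}^2$, it also sends $(H^2)^\perp$ onto $(\mathscr{H}^2)^\perp$, hence it intertwines the orthogonal projections, $\widetilde{\mathcal{U}} P_+^\T = P_+^\R \widetilde{\mathcal{U}}$.

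Combining these, for any $f \in H^2$,
$$\mathcal{U} T_\psi f \;=\; \widetilde{\mathcal{U}} P_+^\T (\psi f) \;=\; P_+^\R \widetilde{\mathcal{U}}(\psi f) \;=\; P_+^\R (\Psi\, \mathcal{U}f) \;=\; T_\Psi \mathcal{U} f,$$
so $\mathcal{U} T_\psi = T_\Psi \mathcal{U}$ on $H^2$. Since $\mathcal{U}$ is a unitary bijection between $H^2$ and $\mathscr{H}^2$, this immediately yields $T_\psi f = 0 \iff T_\Psi \mathcal{U} f = 0$, which is the claim. There is no serious obstacle here; the only thing to be careful about is the extension step and the justification that $\widetilde{\mathcal{U}}$ exchanges the orthogonal complements of the Hardy spaces, which however follows from unitarity once one knows $\widetilde{\mathcal{U}}(H^2) = \mathscr{H}^2$.
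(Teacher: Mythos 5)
Your proof is correct and follows essentially the same route as the paper's: extend $\mathcal{U}$ to a unitary between the full $L^2$ spaces, note that it intertwines multiplication by $\psi$ and $\Psi$ and carries $(H^2)^{\perp}$ onto $(\mathscr{H}^2)^{\perp}$, and deduce the kernel correspondence. Your packaging of this as the intertwining relation $\mathcal{U}T_{\psi}=T_{\Psi}\mathcal{U}$ on $H^2$ is just a slightly more systematic statement of the same argument.
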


\begin{Lemma}\label{L2mult}
$\varphi\in \MM(u,v)$ if and only if $\Phi=\varphi\circ\omega\in \MM(U,V)$.
\end{Lemma}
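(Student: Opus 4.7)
The plan is to transfer the multiplier statement from the disk to the upper half-plane by conjugating with the unitary $\mU$, using the two ingredients already in hand: the intertwining $\mU \K_u = \mathscr{K}_U$ (and the analogous identity for $v$), and a pointwise factorization that lets $\mU$ commute (in a weighted sense) with multiplication.

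First, I would record the pointwise identity: for every $f \in H^2$,
\begin{equation*}
\Phi(z)\cdot (\mU f)(z) = \varphi(\omega(z))\, k_i(z)\, f(\omega(z)) = k_i(z)\,(\varphi f)(\omega(z)) = \bigl(\mU(\varphi f)\bigr)(z),
\end{equation*}
so that $\Phi \cdot \mU f = \mU(\varphi f)$. This is the real content of the lemma; everything else is bookkeeping. Note also that $\varphi \in \operatorname{Hol}(\D)$ if and only if $\Phi \in \operatorname{Hol}(\C_+)$, since $\omega$ is a biholomorphism, and we already know $\varphi \in \MM(u,v) \subseteq H^2$, so $\varphi f \in \operatorname{Hol}(\D)$ in what follows.

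For the forward implication, assume $\varphi \in \MM(u,v)$ and pick $F \in \mathscr{K}_U$. Using $\mU \K_u = \mathscr{K}_U$, write $F = \mU f$ for some $f \in \K_u$. Then $\varphi f \in \K_v$ by hypothesis, and the identity above gives
\begin{equation*}
\Phi F = \Phi \cdot \mU f = \mU(\varphi f) \in \mU \K_v = \mathscr{K}_V.
\end{equation*}
Hence $\Phi \in \MM(U,V)$.

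For the converse, assume $\Phi \in \MM(U,V)$ and pick $f \in \K_u$. Then $\mU f \in \mathscr{K}_U$, so by hypothesis $\Phi \cdot \mU f \in \mathscr{K}_V = \mU \K_v$. By the same identity, $\mU(\varphi f) = \Phi \cdot \mU f \in \mU \K_v$, and since $\mU$ is unitary (in particular injective) this forces $\varphi f \in \K_v$. Thus $\varphi \in \MM(u,v)$.

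The only potential subtlety — and the step I would double-check — is that the product $\varphi f$ is legitimately viewed in $H^2$ (so that applying $\mU$ makes sense). This is automatic because $\varphi \in \MM(u,v)$ already forces $\varphi f \in \K_v \subseteq H^2$ in the forward direction, and in the backward direction the relation $\mU(\varphi f) = \Phi \cdot \mU f \in \mathscr{H}^2$ combined with the isometry property of $\mU$ delivers $\varphi f \in H^2$ a posteriori. No other obstacle arises; the proof is genuinely a two-line transfer argument once the pointwise identity is noted.
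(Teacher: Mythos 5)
Your proof is correct and follows essentially the same route as the paper's: both hinge on the pointwise identity $\Phi\cdot\mU f=\mU(\varphi f)$ (equivalently, $\mU M_\varphi\mU^{-1}=M_\Phi$) together with $\mU\K_u=\mathscr{K}_U$ and $\mU\K_v=\mathscr{K}_V$, and then transfer the containment in both directions using that $\mU$ is a bijection. Your extra care about why $\varphi f\in H^2$ in the converse direction is a reasonable touch, though not substantively different from the paper's argument.
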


\begin{Corollary}\label{55srs6s7sancvc}
With the notation from above, the following are equivalent for $\Phi$ analytic on $\mathbb C_+$:
\begin{enumerate}
\item[(i)] $\Phi \in \MM(U, V)$;
\item[(ii)] $\Phi k_i\in \ker T_{\overline{b_{i}^{+}V} U}$ and  $|\Phi|^2dx$ is a Carleson
measure for $\mathscr{K}_U$.
\end{enumerate}
\end{Corollary}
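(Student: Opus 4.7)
The plan is to transfer Theorem \ref{ppppooiuy} (specifically the equivalence (i) $\iff$ (iii)) from the disk to the upper half-plane by means of the unitary $\mathcal{U}$, together with Lemmas \ref{L1ker} and \ref{L2mult} and a change of variables on the boundary.

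First, introduce the disk-side objects: let $u, v$ be inner on $\D$ with $U = u\circ\omega$, $V = v\circ\omega$, and set $\varphi := \Phi\circ\omega^{-1}$ so that $\Phi = \varphi\circ\omega$. Lemma \ref{L2mult} immediately gives $\Phi\in\MM(U,V)$ iff $\varphi\in\MM(u,v)$, so by Theorem \ref{ppppooiuy} it remains to match the two conditions on $\varphi$ with the two conditions on $\Phi$ stated in (ii).

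For the Toeplitz kernel condition, I would apply Lemma \ref{L1ker} to $\psi = \overline{zv}u\in L^\infty(\T)$. Since $\omega = b_i^+$ on $\R$, a direct substitution yields $\psi\circ\omega = \overline{b_i^+\, V}\,U$. Because $\mathcal{U}\varphi = k_i(\varphi\circ\omega) = k_i\Phi$, Lemma \ref{L1ker} gives $\varphi\in\ker T_{\overline{zv}u}$ iff $k_i\Phi\in\ker T_{\overline{b_i^+ V}\,U}$, which is precisely the first half of (ii).

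For the Carleson condition, the pushforward of $m$ on $\T$ under $\omega^{-1}$ is $|k_i(x)|^2\,dx$ on $\R$. Given any $F\in\mathscr{K}_U$, write $F=\mathcal{U}f$ with $f\in\K_u$, so $|F|^2 = |k_i|^2\,|f\circ\omega|^2$ a.e. on $\R$ and $\|F\|_{\mathscr{H}^2}=\|f\|_{H^2}$. Then
\[
\int_\T |f|^2|\varphi|^2\,dm \;=\; \int_\R |f\circ\omega|^2|\Phi|^2|k_i|^2\,dx \;=\; \int_\R |F|^2|\Phi|^2\,dx,
\]
so $|\varphi|^2\,dm$ is Carleson for $\K_u$ iff $|\Phi|^2\,dx$ is Carleson for $\mathscr{K}_U$. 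Combining these three equivalences proves (i) $\iff$ (ii).

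The only real obstacle is bookkeeping, namely verifying $\psi\circ\omega = \overline{b_i^+ V}\,U$ and tracking the $k_i$ factors so that the change of variable correctly identifies the two Carleson conditions; neither step involves any substantive difficulty once the normalizations of $\mathcal{U}$ and of $dm$ vs.\ $dx$ are fixed.
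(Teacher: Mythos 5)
Your proposal is correct and is precisely the argument the paper intends: the corollary is obtained by transporting Theorem \ref{ppppooiuy} ((i)$\iff$(iii)) through the unitary $\mathcal{U}$, using Lemma \ref{L2mult} for the multiplier condition, Lemma \ref{L1ker} with $\psi=\overline{zv}u$ (so $\Psi=\overline{b_i^+V}U$ and $\mathcal{U}\varphi=k_i\Phi$) for the kernel condition, and the change of variables $dm\mapsto|k_i|^2\,dx$ for the Carleson condition. The only point worth making explicit is that in each direction one first knows $\varphi=\Phi\circ\omega^{-1}\in H^2$ (from $\MM(u,v)\subseteq H^2$ in one direction, and from $k_i\Phi\in\ker T_{\overline{b_i^+V}U}\subseteq\mathscr{H}^2$ in the other) before invoking Theorem \ref{ppppooiuy}.
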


We now discuss a situation when the Carleson condition becomes more tractable. We begin with a result from Baranov \cite[Thm.~5.1]{MR1784683}. 

\begin{Theorem}\label{Lem:caracterisation-carleson-measure}
Let $U$ be an inner function in $\mathbb C_+$ such that $|U'(x)|\asymp 1$, $x\in\R$. For a positive Borel measure $\mu$ on $\mathbb R$, the following are equivalent:
\begin{enumerate}[(i)]
\item $\mu$ is a Carleson measure for $\mathscr{K}_U$.
\item We have
$
{\displaystyle M:=\sup_{x\in\mathbb R}\mu([x,x+1])<\infty.}
$
\end{enumerate} 
\end{Theorem}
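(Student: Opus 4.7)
The plan is to prove both implications in the classical style for Carleson measures on reproducing-kernel Hilbert spaces: a reproducing-kernel test for $(i) \Rightarrow (ii)$, and a covering plus local sub-mean-value estimate for $(ii) \Rightarrow (i)$.

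\textbf{(i) $\Rightarrow$ (ii).} I would test the Carleson inequality on the reproducing kernels $K_{x+i}^U$, $x \in \R$. From the explicit formula for $K_\lambda^U$, one has $\|K_{x+i}^U\|^2 = K_{x+i}^U(x+i) = (1 - |U(x+i)|^2)/(4\pi)$. A consequence of the hypothesis $|U'(x)| \asymp 1$ on $\R$, via the standard relation between the angular derivative of an inner function and the boundary behavior of $|U|$, is that $1 - |U(x+i)|^2 \asymp 1$ uniformly in $x \in \R$, so $\|K_{x+i}^U\|^2 \asymp 1$. For $t \in [x, x+1]$ the denominator $|t - x + i| \leq \sqrt{2}$ and the numerator satisfies $|1 - \overline{U(x+i)} U(t)| \geq 1 - |U(x+i)| \gtrsim 1$, hence $|K_{x+i}^U(t)|^2 \gtrsim 1 \asymp \|K_{x+i}^U\|^2$ there. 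Restricting the Carleson inequality applied to $K_{x+i}^U$ to the integration set $[x, x+1]$ then yields $\mu([x, x+1]) \lesssim 1$, uniformly in $x$.

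\textbf{(ii) $\Rightarrow$ (i).} The key ingredient is the local sub-mean-value estimate for $\mathscr{K}_U$: for every $f \in \mathscr{K}_U$ and every $n \in \Z$,
$$\sup_{t \in [n, n+1]} |f(t)|^2 \;\lesssim\; \int_{n-1}^{n+2} |f(x)|^2 \, dx,$$
with a constant independent of $f$ and $n$. Granting this, I would partition $\R$ into the unit intervals $[n, n+1]$ and estimate
$$\int_\R |f|^2 d\mu = \sum_{n \in \Z} \int_{[n,n+1]} |f|^2 d\mu \leq M \sum_n \sup_{[n,n+1]} |f|^2 \lesssim M \sum_n \int_{n-1}^{n+2} |f|^2 dx \leq 3M \|f\|^2,$$
which is the Carleson inequality.

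The main obstacle is the local sub-mean-value estimate used in $(ii) \Rightarrow (i)$. Its proof, essentially due to Baranov, is based on the Bernstein-type inequality $\|f'\|_{L^2(\R)} \lesssim \|\,|U'|^{1/2} f\|_{L^2(\R)}$ valid on $\mathscr{K}_U$; under $|U'| \asymp 1$ this collapses to $\|f'\|_{L^2} \lesssim \|f\|_{L^2}$, and a one-dimensional Sobolev embedding then converts the resulting $H^1$-bound into the claimed pointwise control on unit intervals. The other non-trivial input is the uniform lower bound $1 - |U(x+i)|^2 \gtrsim 1$ used in the first direction, a genuine consequence of $|U'| \asymp 1$ that must be extracted from the boundary-behavior theory of inner functions. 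Once these two technical facts are in hand, the remaining steps are a routine covering argument.
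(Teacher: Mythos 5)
The paper does not actually prove this statement: it is quoted verbatim from Baranov (the citation \cite[Thm.~5.1]{MR1784683}), so there is no internal proof to compare against. Your argument is, in effect, a self-contained reconstruction of Baranov's method, and both halves are sound. For $(i)\Rightarrow(ii)$ your test functions $K^U_{x+i}$ work: the only nontrivial input is the uniform lower bound $1-|U(x+i)|\gtrsim 1$, and this does follow from the hypothesis --- $|U'|\in L^\infty(\R)$ forces the zeros of $U$ to satisfy $\Im z_n\geq\eta>0$ and kills any singular part on $\R$ (this is the content of Remark \ref{Dya}), whence $\log\bigl(1/|U(x+i)|\bigr)\gtrsim a+\sum_n \Im z_n/|x-z_n|^2\asymp|U'(x)|\gtrsim 1$. (Note that the commented-out argument in the source instead tests on the \emph{boundary} kernels $k^U_{x_0}$, $x_0\in\R$, and runs $(ii)\Rightarrow(i)$ through the Treil--Volberg box criterion; your route via the Bernstein inequality is a genuine alternative and is closer to Baranov's original proof.)

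One step is stated more strongly than what your own justification delivers. The purely local sub-mean-value inequality $\sup_{[n,n+1]}|f|^2\lesssim\int_{n-1}^{n+2}|f|^2\,dx$ does \emph{not} follow from ``global Bernstein plus Sobolev'': the one-dimensional Sobolev embedding on a unit interval gives
\[
\sup_{[n,n+1]}|f|^2\;\lesssim\;\int_{n}^{n+1}\bigl(|f|^2+|f'|^2\bigr)\,dx,
\]
with the derivative term present, and there is no obvious way to localize the Bernstein inequality so as to absorb $\int_n^{n+1}|f'|^2$ into $\int_{n-1}^{n+2}|f|^2$. This does not break the proof: keep the derivative term, sum over $n\in\Z$ (each point of $\R$ is covered boundedly many times), and then apply the \emph{global} bound $\|f'\|_{L^2(\R)}\lesssim\||U'|^{1/2}f\|_{L^2(\R)}\asymp\|f\|_{L^2(\R)}$ once at the end, giving $\sum_n\sup_{[n,n+1]}|f|^2\lesssim\|f\|^2+\|f'\|^2\lesssim\|f\|^2$ and hence $\int_\R|f|^2\,d\mu\leq M\sum_n\sup_{[n,n+1]}|f|^2\lesssim M\|f\|^2$. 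With that rewording, and with the two technical inputs you correctly flagged (the Bernstein inequality of Dyakonov--Baranov and the lower bound on $1-|U(x+i)|$) supplied with references, the proof is complete.
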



\begin{Remark}\label{Dya}
Concerning boundedness of $U'$, Dyakonov \cite{MR1097956} proved that the following conditions are equivalent: 
\begin{enumerate}[(i)]
\item $U'\in L^\infty(\mathbb R)$.
\item For some $\eta, \varepsilon>0$, $\{z\in \C_+:|U(z)|<\epsilon\} \subset \{z\in \C:\Im z>\eta\}.$
\item $\mathscr{K}_U\subset \mathscr{H}^\infty$, the bounded analytic functions on $\C_{+}$.
\end{enumerate}

\end{Remark}

\begin{Theorem}\label{|22122144141}
Let $U$ and $V$ be inner functions with $|U'(x)| \asymp 1, x \in \R$. 
Then 
\[
 \MM(U,V) =  \Big\{\Phi\in (z+i)\ker T_{\overline{b_{i}^{+}V} U}: M:=\sup_{x\in\R}\int_x^{x+1}|\Phi(t)|^2dt
 <\infty\Big\}.
\]
\end{Theorem}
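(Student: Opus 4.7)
The plan is to derive this characterization by combining two results stated earlier in the paper: Corollary \ref{55srs6s7sancvc}, which handles $\MM(U,V)$ in terms of a Toeplitz kernel condition plus a Carleson condition, and Baranov's Theorem \ref{Lem:caracterisation-carleson-measure}, which converts the Carleson condition into a uniform integral estimate under the hypothesis $|U'(x)|\asymp 1$. So the bulk of the work has already been done elsewhere; what remains is bookkeeping.

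First I would apply Corollary \ref{55srs6s7sancvc} to reduce $\Phi \in \MM(U,V)$ to the conjunction of the two conditions
\[
\Phi\, k_i \in \ker T_{\overline{b_{i}^{+}V}\, U} \qquad\text{and}\qquad |\Phi(x)|^2\,dx \ \text{is a Carleson measure for}\ \mathscr{K}_U.
\]
Since $k_i(z)=(\sqrt{\pi}(z+i))^{-1}$ is a nonzero scalar multiple of $1/(z+i)$ and $\ker T_{\overline{b_{i}^{+}V}\, U}$ is a linear subspace of $\mathscr{H}^2$, the first condition is equivalent to $\Phi/(z+i) \in \ker T_{\overline{b_{i}^{+}V}\, U}$, i.e.\ $\Phi \in (z+i)\ker T_{\overline{b_{i}^{+}V}\, U}$. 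This is exactly the membership condition in the statement, and it also supplies the analyticity of $\Phi$ on $\C_+$ (as $\Phi$ is the product of $z+i$ with an $\mathscr{H}^2$ function) as well as boundary values on $\R$ in $L^2_{\text{loc}}$, so that $|\Phi|^2dx$ is a legitimate positive Borel measure on $\R$.

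Next I would apply Theorem \ref{Lem:caracterisation-carleson-measure} with $d\mu:=|\Phi(x)|^2\,dx$. The hypothesis $|U'(x)|\asymp 1$ on $\R$ is precisely what Baranov's theorem requires, and the conclusion rewrites the Carleson condition as
\[
M = \sup_{x\in\R}\mu([x,x+1]) = \sup_{x\in\R}\int_x^{x+1}|\Phi(t)|^2\,dt < \infty,
\]
matching the second condition in the statement. Combining the two reformulations yields the claimed equality.

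There is no serious obstacle in this plan since both heavy-lifting tools are available. The only subtle point I would take care of is to confirm, in one direction, that any $\Phi$ satisfying the two right-hand conditions really is an analytic function on $\C_+$ for which $|\Phi|^2dx$ is a well-defined positive Borel measure on $\R$ (so that Corollary \ref{55srs6s7sancvc} applies); but this follows automatically from membership in $(z+i)\ker T_{\overline{b_{i}^{+}V}\, U}\subset (z+i)\mathscr{H}^2$, as noted above.
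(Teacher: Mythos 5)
Your proposal is correct and is essentially identical to the paper's own proof: both reduce the statement to Corollary \ref{55srs6s7sancvc} via the observation that $\Phi k_i\in \ker T_{\overline{b_{i}^{+}V} U}$ if and only if $\Phi\in (z+i)\ker T_{\overline{b_{i}^{+}V} U}$, and then invoke Baranov's Theorem \ref{Lem:caracterisation-carleson-measure} to rewrite the Carleson condition as the uniform integral bound. The extra remark you make about well-definedness of the measure $|\Phi|^2\,dx$ is a harmless (and correct) addition.
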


\begin{proof}
Observe that 
$\Phi k_i\in \ker T_{\overline{b_{i}^{+}V} U} \iff \Phi\in (z+i)\ker
T_{\overline{b_{i}^{+}V} U}$
and apply Corollary~\ref{55srs6s7sancvc} and Theorem~\ref{Lem:caracterisation-carleson-measure}.
%
\end{proof}

\begin{Lemma}
We have 
$$F\in \ker T_{\overline{V} U} \iff F\in \Big((z+i)\ker
T_{\overline{b_{i}^{+}V} U}\Big)\cap \mathscr{H}^2.$$
\end{Lemma}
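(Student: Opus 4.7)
The plan is to verify each direction by direct boundary computations on $\R$, using the identities
\[
\frac{\overline{b_i^+(x)}}{x+i}=\frac{1}{x-i}\qquad\text{and}\qquad (x+i)\,b_i^+(x)=x-i,\qquad x\in\R,
\]
which follow from $b_i^+(x)=(x-i)/(x+i)$ and $|b_i^+|=1$ on $\R$. These let one pass between the symbols $\overline{V}U$ and $\overline{b_i^+V}U$ by multiplication by $(z\pm i)$.

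For the forward implication, suppose $F\in\ker T_{\overline{V}U}$, so that $\overline{V}UF=\overline{k}$ on $\R$ for some $k\in\mathscr{H}^2$. Since $1/(z+i)\in\mathscr{H}^\infty$, the function $G:=F/(z+i)$ lies in $\mathscr{H}^2$ and $F=(z+i)G$. Using the first identity,
\[
\overline{b_i^+V}UG\;=\;\frac{\overline{V}UF}{z-i}\;=\;\frac{\overline{k}}{z-i}\;=\;\overline{\Big(\frac{k}{z+i}\Big)}\qquad\text{on }\R,
\]
and since $k/(z+i)\in\mathscr{H}^2$, this lies in $\overline{\mathscr{H}^2}$. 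Hence $G\in\ker T_{\overline{b_i^+V}U}$, which is what was needed.

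For the reverse implication, assume $F=(z+i)G\in\mathscr{H}^2$ with $G\in\ker T_{\overline{b_i^+V}U}$, and write $\overline{b_i^+V}UG=\overline{h}$ with $h\in\mathscr{H}^2$. Multiplying by $b_i^+$ on $\R$ gives $\overline{V}UG=b_i^+\overline{h}$, so
\[
\overline{V}UF\;=\;(x+i)\,b_i^+\,\overline{h}\;=\;(x-i)\,\overline{h}\;=\;\overline{(x+i)h(x)}\qquad\text{on }\R.
\]
Thus $F\in\ker T_{\overline{V}U}$ will follow once we know $(z+i)h\in\mathscr{H}^2$.

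This final step is the main obstacle, since multiplication by the unbounded polynomial $z+i$ does not a priori preserve $\mathscr{H}^2$. The resolution is Smirnov's theorem: $1/(z+i)\in\mathscr{H}^\infty$ is outer, so its reciprocal $z+i$ belongs to the Smirnov class $N^+(\C_+)$; together with $h\in\mathscr{H}^2\subset N^+(\C_+)$, multiplicative closure of $N^+(\C_+)$ gives $(z+i)h\in N^+(\C_+)$. The boundary values satisfy $|(x+i)h(x)|=|F(x)|$, so $(z+i)h|_{\R}\in L^2(\R)$ because $F\in\mathscr{H}^2$. Smirnov's theorem then yields $(z+i)h\in\mathscr{H}^2$, completing the proof.
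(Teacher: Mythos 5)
Your proof is correct and follows essentially the same route as the paper's: both directions are handled by the boundary identities relating the symbols $\overline{V}U$ and $\overline{b_i^+V}U$ through multiplication by $z\pm i$, and the converse is closed with the same Smirnov-class upgrade of $(z+i)h$ to $\mathscr{H}^2$ (which the paper states more tersely but you justify in more detail). No gaps.
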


\begin{proof}
The function $F$ belongs to $\ker T_{U\overline{V}}$ if and only if there is a $\psi\in \mathscr{H}^2$ such that
$
 \overline{V} UF=\overline{\psi}.
$
A calculation shows that 
$$
\overline{V(x)b_{i}^{+}(x)}U(x)  F(x) k_i(x) =\overline{(\psi k_i)(x)}, \quad x \in \R.
$$
Hence $Fk_i\in \ker T_{\overline{b_{i}^{+}V} U}$ and so $F\in (z+i)\ker T_{\overline{b_{i}^{+}V} U}$. 
\\
\\
The converse argument is in the same spirit. Indeed, when 
 $$F\in (z+i)\ker
T_{\overline{b_{i}^{+}V} U}\cap \mathscr{H}^2,$$
we get
$
 F(x)(\overline{V(x)} U(x))=\overline{\psi(x)}(x-i)=\overline{\psi(x)(x+i)}.
$
Since $F\in \mathscr{H}^2$, and $U\overline{V}$ is bounded, we deduce that $\psi (z+i)\in \mathscr{H}^2$, and so
$F\in \ker T_{\overline{V} U}$.
\end{proof}



\begin{Corollary}\label{charactmultbddder}
Let $U$ and $V$ be inner functions with $|U'(x)| \asymp 1, x \in \R$. 
Then
$
 \MM(U,V)\cap \mathscr{H}^2=\ker T_{U\overline{V}}.
$
\end{Corollary}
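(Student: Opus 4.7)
The plan is to obtain the equality by directly combining Theorem~\ref{|22122144141} with the preceding Lemma, noting that the symbol $U\overline{V}$ coincides with $\overline{V}U$ as a function on $\R$, so $\ker T_{U\overline{V}}=\ker T_{\overline{V}U}$. The key observation is that within $\mathscr{H}^2$, the Carleson box condition appearing in Theorem~\ref{|22122144141} becomes automatic: for any $F\in\mathscr{H}^2$,
\[
 \sup_{x\in\R}\int_x^{x+1}|F(t)|^2\,dt \;\leq\; \int_\R|F(t)|^2\,dt \;=\; \|F\|_{\mathscr{H}^2}^2 \;<\;\infty.
\]
So the whole game is to identify the ``kernel part'' of the characterization.

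For the inclusion $\MM(U,V)\cap\mathscr{H}^2\subseteq\ker T_{U\overline{V}}$, I would take $\Phi\in\MM(U,V)\cap\mathscr{H}^2$ and invoke Theorem~\ref{|22122144141} to conclude $\Phi\in (z+i)\ker T_{\overline{b_i^+V}U}$. Since we also have $\Phi\in\mathscr{H}^2$, the preceding Lemma immediately gives $\Phi\in\ker T_{\overline{V}U}=\ker T_{U\overline{V}}$.

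For the reverse inclusion, I would start with $F\in\ker T_{U\overline{V}}$ and apply the Lemma in the other direction to obtain both $F\in\mathscr{H}^2$ and $F\in(z+i)\ker T_{\overline{b_i^+V}U}$. The Carleson condition of Theorem~\ref{|22122144141} is then free by the display above, so Theorem~\ref{|22122144141} yields $F\in\MM(U,V)$, and together with $F\in\mathscr{H}^2$ this places $F$ in $\MM(U,V)\cap\mathscr{H}^2$.

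There is essentially no obstacle here; this is a bookkeeping argument that packages Theorem~\ref{|22122144141} and the Lemma together. The only conceptual point worth flagging is that the hypothesis $|U'(x)|\asymp 1$ on $\R$ is what makes Theorem~\ref{Lem:caracterisation-carleson-measure} (and hence Theorem~\ref{|22122144141}) applicable, and this is precisely the place where the hypothesis is used.
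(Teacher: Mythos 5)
Your argument is correct and is exactly the intended one: the corollary is stated in the paper without proof precisely because it follows by combining Theorem~\ref{|22122144141} with the preceding Lemma, with the Carleson box condition being automatic for $\mathscr{H}^2$ functions as you observe. Nothing to add.
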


\subsection*{Resolving a question of Crofoot} 
We notice that an example constructed in \cite{Lyubarski-Seip} answers a question of Crofoot \cite[p.~244]{Crofoot}.
We will state the result for the model spaces $\mathscr{K}_{\Theta}$ of the upper-half plane and then use Lemma \ref{L2mult}.

\begin{Theorem}\label{nnxbbxvxrx55}
There are two inner functions $B$ and $\Theta$ on $\C_{+}$ and an unbounded analytic function $\Psi$ on $\C_{+}$ such that $\Psi\K_B = \K_\Theta$.
\end{Theorem}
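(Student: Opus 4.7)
The plan is to use Lemma~\ref{L2mult} to reduce the problem to constructing, on the upper half plane, inner functions $U, V$ and an unbounded analytic $\Psi$ with $\Psi\,\mathscr{K}_U = \mathscr{K}_V$; the disk statement then follows by transferring the example back via $\omega$. The natural candidate for $V$ is the exponential $V(z) = e^{2iaz}$ for some $a > 0$, so that $\mathscr{K}_V$ is unitarily the Paley--Wiener space $PW_a$. The key input is then the entire function $\Psi$ constructed by Lyubarskii and Seip: it is of exponential type $a$, its real zero set $\{\lambda_n\}$ is a complete interpolating sequence for $PW_a$, and $\Psi$ is unbounded on $\R$.

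From $\Psi$ I would extract, via the standard Hermite--Biehler factorization, an entire function $E$ with no zeros in $\overline{\C_+}$ and $|E^*(z)| < |E(z)|$ on $\C_+$; the corresponding meromorphic inner function $U := E^*/E$ has the property that the de Branges space $\mathcal{H}(E)$ is isomorphic to $\mathscr{K}_U$ via $f \mapsto f/E$. The Lyubarskii--Seip construction further ensures $|U'(x)| \asymp 1$ on $\R$, so that Theorem~\ref{|22122144141} is directly applicable.

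With $U$ and $V$ in place, I would verify $\Psi \in \MM(U, V)$ from Theorem~\ref{|22122144141}: the kernel-of-Toeplitz condition $\Psi k_i \in \ker T_{\overline{b_i^+ V}U}$ unravels to the statement that $\overline{V}U\Psi$ is anti-analytic on $\R$, which holds because $U\Psi$ and $V$ are both entire of exponential type $a$; and the uniform $L^2$-bound $\sup_x \int_x^{x+1}|\Psi(t)|^2\, dt<\infty$ is precisely one of the standard Lyubarskii--Seip growth estimates on their generating function. The surjectivity $\mathscr{K}_V \subseteq \Psi\,\mathscr{K}_U$ is the substantive part: given $g \in PW_a$, the map $g \mapsto g/\Psi$ defines a bounded operator into $\mathscr{K}_U \cong \mathcal{H}(E)$ precisely because $\{\lambda_n\}$ is a complete interpolating sequence, so the quotient is a well-defined element of the de Branges space with controlled norm.

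The main obstacle I anticipate is essentially bookkeeping: translating between the Lyubarskii--Seip sampling/interpolation formalism and the inner-function/multiplier language used throughout this paper. Concretely, one must extract from their construction the Hermite--Biehler function $E$, verify that $U=E^*/E$ is genuinely inner with $|U'|\asymp 1$ on $\R$ (as opposed to only meromorphic or only satisfying a one-sided bound), and check that the completeness-plus-bounded-inverse estimates of Lyubarskii--Seip upgrade the inclusion $\Psi\,\mathscr{K}_U \subseteq \mathscr{K}_V$ to genuine equality. The unboundedness of $\Psi$ on $\R$, and hence on $\C_+$, is the defining feature of the Lyubarskii--Seip example and requires no further argument.
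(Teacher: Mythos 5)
Your overall strategy --- take $V(z)=e^{2\pi iz}$ so that $\mathscr{K}_V=e^{i\pi z}PW$, build $U$ from the Lyubarskii--Seip entire function, and let the multiplier be (essentially) $e^{i\pi z}E_\delta$ --- is the paper's strategy. But your verification plan has a genuine flaw: for the Lyubarskii--Seip function $E_\delta$, the zeros $\pm(k-\delta)-ik^{-4\delta}$ accumulate at the real axis, so for $U=E_\delta^*/E_\delta$ one has $|U'(x)|\gtrsim k^{4\delta}\to\infty$ as $x\to k-\delta$; equivalently, by Remark \ref{Dya}, the sublevel sets of $U$ are \emph{not} bounded away from $\R$. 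Thus the hypothesis $|U'(x)|\asymp 1$ of Theorem \ref{|22122144141} fails --- your claim that the construction ``ensures'' it is backwards --- and neither that theorem nor the Carleson criterion of Theorem \ref{Lem:caracterisation-carleson-measure} is applicable here. (A smaller inaccuracy: the Lyubarskii--Seip zeros are not real; sitting strictly below $\R$ is exactly what makes $E_\delta$ Hermite--Biehler in the strict sense used in the paper, so there is no separate ``factorization'' step to extract $E$ from $\Psi$.)

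The repair, which is the paper's actual argument, is to bypass the Toeplitz/Carleson machinery of Section \ref{seven} entirely. Since $E_\delta\in HB$, the map $F\mapsto F/E_\delta$ is unitary from $\HH(E_\delta)$ onto $\mathscr{K}_{U}$ with $U=E_\delta^*/E_\delta$, i.e. $E_\delta\,\mathscr{K}_{U}=\HH(E_\delta)$ by \eqref{eq:deBR-model}; combined with the Lyubarskii--Seip identity $\HH(E_\delta)=PW$ (with equivalent norms, see \eqref{eq1:exemple-Lyu-Seip}), setting $\Psi=e^{i\pi z}E_\delta$ gives in one line
\begin{equation*}
\Psi\,\mathscr{K}_{U}=e^{i\pi z}\HH(E_\delta)=e^{i\pi z}PW=\mathscr{K}_{V},
\end{equation*}
which yields \emph{both} inclusions simultaneously --- no membership test in $\MM(U,V)$ and no separate surjectivity argument via interpolating sequences is needed. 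Finally, the unboundedness of $\Psi$ is not quite ``for free'': it requires the estimate $|E_\delta(x)|\simeq(1+|x|)^{2\delta}\dist(x,\Lambda_\delta)$ from \eqref{eq2:exemple-Lyu-Seip}, evaluated at the midpoints $x_k$ between consecutive zeros, where $\dist(x_k,\Lambda_\delta)\geq\tfrac12$ and hence $|E_\delta(x_k)|\gtrsim k^{2\delta}\to\infty$.
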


 The construction is based on the relationship between the model subspaces generated by meromorphic inner functions and the de Branges spaces of entire functions \cite{MR0229011}. 

First we define the {\em Paley-Wiener} class 
$$
PW =  \Big\{F \in \operatorname{Hol}(\C): \frac{F}{e^{- i \pi z}}, \frac{F^{*}}{e^{- i \pi z}} \in \mathscr{H}^2\Big\}, \quad F^{*}(z) := \overline{F(\overline{z})}.$$

 Let $E$ be an entire function which belongs to the {\em Hermite--Biehler class} $H\!B$, i.e., 
$$|E(z)| \geqslant |E(\overline{z})|, \quad  \Im z > 0$$ and 
$E$ does not have any zeros in $\C_{+}^{-}$ (the closed upper half plane). With $E \in H\!B$, define the {\em de Branges space}
\begin{equation}\label{o48tugjflkds}
\HH(E) := \Big\{F \in \operatorname{Hol}(\C): \frac{F}{E}, \frac{F^{*}}{E} \in \mathscr{H}^2\Big\}.
\end{equation}
The norm in $\HH(E)$ is defined by 
$$
\|F\|_E=\|\frac{F}{E}\|_{L^2(\R)},\qquad F\in\HH(E).
$$
If $E\in H\!B$, then $\Theta=E^*/E$ is a meromorphic inner function in $\C_+$, meaning that $\Theta$ is an inner function and that $\Theta$ has an analytic continuation to an open neighborhood of $\C_{+}^{-}$.  Conversely,  each meromorphic inner function $\Theta$ admits a representation $\Theta=E^*/E$ for some entire function $E\in HB$. One can see from the identity $\mathscr{K}_{U} = \mathscr{H}^2 \cap U \overline{\mathscr{H}^2}$ that when $\Theta=E^*/E$, the operator 
$F\mapsto F/E$ is unitary from $\HH(E)$ onto the model space $\mathscr{K}_{\Theta}$, that is to say, 
\begin{equation}\label{eq:deBR-model}
\mathscr{K}_{\Theta}=\frac{1}{E}\HH(E).
\end{equation} 
When $E(z)=e^{-i\pi z}$, one can check that $E \in HB$, $\Theta = E^{*}/E$ satisfies $\Theta(z)= e^{2i\pi z}$, and 
$
\mathscr{K}_{\Theta}=e^{i\pi z}\HH(E)=e^{i\pi z}PW.
$

\begin{proof}[Proof of Theorem \ref{nnxbbxvxrx55}]
Fix $\delta \in (0, \tfrac{1}{4})$ and set
$$
E_\delta(z)=(z+i)\prod_{k = 1}^{\infty}\left(1-\frac{z}{k-\delta-ik^{-4\delta}}\right)\left(1-\frac{z}{-k+\delta-ik^{-4\delta}}\right).
$$
 It is shown in 
\cite{Lyubarski-Seip} that $E_{\delta} \in HB$,  
\begin{equation}\label{eq1:exemple-Lyu-Seip}
\HH(E_\delta)=PW,
\end{equation}
with equivalent norms, 
and 
\begin{equation}\label{eq2:exemple-Lyu-Seip}
|E_\delta(x)|\simeq (1+|x|)^{2\delta} \dist(x,\Lambda_\delta),\qquad x\in\R,
\end{equation}
where 
\begin{align*}
\Lambda_{\delta} & = E_{\delta}^{-1}(\{0\}) \\
& = \{k - \delta - i k^{-4\delta}: k \geqslant 1\} \cup \{-k + \delta - i k^{-4\delta}: k \geqslant 1\} \cup \{-i\}.
\end{align*}
If we define $I_\delta=E_\delta^*/E_\delta$, then $I_\delta$ is a meromorphic inner function on $\C_+$. Define $\Psi_\delta(z)=e^{i\pi z}E_\delta(z)$ and use \eqref{eq:deBR-model} and \eqref{eq1:exemple-Lyu-Seip} to obtain
$$
\Psi_\delta \mathscr{K}_{I_\delta}=e^{i\pi z} E_\delta \mathscr{K}_{I_\delta}=e^{i\pi z} \HH(E_{\delta})=e^{i\pi z} PW=\mathscr{K}_{\Theta},
$$
where $\Theta(z) = e^{2 \pi i z}$.
Hence $\Psi_{\delta}$ is a multiplier from $\mathscr{K}_{I_\delta}$ onto $\mathscr{K}_{\Theta}$. We now argue  that $\Psi_{\delta}$ is unbounded. Indeed, the zero set $\Lambda_{\delta}$ of $E_{\delta}$ contains  
$$z_k = (k - \delta) - i k^{-4 \delta}, \quad k \geqslant 1.$$
For each interval $(k - \delta, k + 1 - \delta)$, the zeros $z_k$ and $z_{k + 1}$ lie just below the respective endpoints $k - \delta$ and $k + 1 - \delta$. If $x_k$ is 
the midpoint of $(k - \delta, k + 1 - \delta)$, one can see that 
$\mbox{dist}(x_k, \Lambda_{\delta}) \geqslant \tfrac{1}{2}.$
From \eqref{eq2:exemple-Lyu-Seip} we conclude that  
$$|E_{\delta}(x_k)| \simeq (1 + x_k)^{2 \delta} \mbox{dist}(x_k, \Lambda_{\delta}) \gtrsim (1 + x_k)^{2 \delta} \simeq k^{2 \delta} $$
which goes to infinity as $k \to \infty$. The fact that $\Psi_{\delta}$ is unbounded now follows.
\end{proof}

 This example can be transferred to the disk via
$u = I_{\delta} \circ \omega^{-1}, v = \Theta \circ \omega^{-1}, \phi = \Psi_{\delta} \circ \omega^{-1}$, and applying Lemma \ref{L2mult}.

\subsection*{Crofoot once again}

Crofoot proved that $\phi \K_u = \K_v \implies \MM(u, v) = \C \phi$. A natural question to ask is whether or not $\MM(u, v) = \C \phi \implies \phi \K_u = \K_v$? The answer, in general,  is no. Similar to Theorem 6.9, we construct our example in the upper-half plane setting. 

\begin{Theorem}\label{39782194uro3iq}
There are two inner functions $B$ and $\Theta$ on $\C_{+}$  such that $\MM(B, \Theta) = \C \Psi$, with $\Psi \not \equiv 0$, but $\Psi \mathscr{K}_{B} \subsetneq \mathscr{K}_{\Theta}$. 

\end{Theorem}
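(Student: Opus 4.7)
The plan is to build on the de Branges / Lyubarskii--Seip framework used in the proof of Theorem~\ref{nnxbbxvxrx55}. Following that setup, I would take $B = E_1^*/E_1$ and $\Theta = E_2^*/E_2$ for two Hermite--Biehler entire functions $E_1, E_2 \in HB$. By the correspondence $\mathscr{K}_{E_j^*/E_j} = (1/E_j)\mathscr{H}(E_j)$, a multiplier $\phi \in \mathscr{M}(B,\Theta)$ corresponds, via $\Phi := \phi E_2/E_1$, to an entire multiplier $\Phi\colon \mathscr{H}(E_1) \to \mathscr{H}(E_2)$, and onto multipliers correspond to $\Phi$ with $\Phi\,\mathscr{H}(E_1) = \mathscr{H}(E_2)$. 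The task is therefore to choose $E_1,E_2$ so that (a) multipliers $\mathscr{H}(E_1) \to \mathscr{H}(E_2)$ form a one-dimensional space, and (b) the unique (up to scalar) multiplier $\Phi_0$ satisfies $\Phi_0\,\mathscr{H}(E_1) \subsetneq \mathscr{H}(E_2)$ strictly.

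The natural starting point is $E_1 = E_\delta$ of Lyubarskii--Seip type, so that $\mathscr{H}(E_1) = PW$ with equivalent norms, and $B = I_\delta$. The Paley--Wiener/Liouville rigidity used in the proof of Theorem~\ref{nnxbbxvxrx55} then says that multipliers $PW \to PW$ are exactly the constants; this is the rigidity we need to preserve. To achieve (b), I would take $E_2$ so that $\mathscr{H}(E_2)$ is a strictly larger de Branges space than $\Phi_0 \cdot PW$ for the specific $\Phi_0$ coming from the correspondence. A concrete attempt is $E_2 = E_\delta \cdot G$ for a carefully chosen $G \in HB$; then $\Theta = E_2^*/E_2 = I_\delta \cdot (G^*/G)$ and $\mathscr{H}(E_2)$ properly contains $G \cdot PW$, providing the strict inclusion $\Psi\,\mathscr{K}_B \subsetneq \mathscr{K}_\Theta$ (with $\Psi$ the function corresponding to $\Phi_0$).

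The main obstacle is arranging (a) together with (b). Enlarging $\mathscr{H}(E_2)$ beyond $\Phi_0 \cdot PW$ generically introduces additional directions of multipliers $PW \to \mathscr{H}(E_2)$, so the multiplier space becomes $\geq 2$-dimensional. The same issue surfaces concretely via the reduction used in Theorem~\ref{ppppooiuy}: since $\Psi_\delta$ of Theorem~\ref{nnxbbxvxrx55} is zero-free in $\mathbb{C}_+$, any $\Theta$ with $e^{2\pi i z} \mid \Theta$ yields $\mathscr{M}(I_\delta,\Theta) = \Psi_\delta \cdot \mathscr{M}(e^{2\pi i z},\Theta)$, and using Corollary~\ref{55srs6s7sancvc} together with Theorem~\ref{Lem:caracterisation-carleson-measure} (applicable since $|(e^{2\pi i z})'| = 2\pi$), one gets $\mathscr{M}(e^{2\pi i z}, e^{2\pi i z} J) = \mathbb{C} \oplus \{(z-i)g : g \in \mathscr{K}_J,\ \sup_x \int_x^{x+1}(t^2+1)|g|^2\,dt < \infty\}$, and reproducing kernels of $\mathscr{K}_J$ always contribute nonzero elements to the second summand.

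Thus the construction must avoid the divisibility $e^{2\pi i z} \mid \Theta$ and instead exploit a direct de Branges rigidity. The heart of the proof will be to exhibit a single $G \in HB$ (or equivalently a pair $E_1,E_2$) for which one can verify by direct computation that the only entire $\Phi$ with $\Phi \cdot PW \subset \mathscr{H}(E_2)$ is $\mathbb{C}\cdot \Phi_0$, while $\Phi_0 \cdot PW$ is a proper subspace of $\mathscr{H}(E_2)$. The verification of uniqueness is the delicate step, and would likely proceed by combining the Paley--Wiener growth and type constraints satisfied by any such $\Phi$ (forced by the Lyubarskii--Seip asymptotics $|E_\delta(x)| \asymp (1+|x|)^{2\delta}\operatorname{dist}(x,\Lambda_\delta)$) with the specific growth profile of $E_2$, ruling out all multiplier directions other than $\Phi_0$.
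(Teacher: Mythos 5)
Your proposal correctly sets up the de Branges--space reformulation and, to your credit, correctly diagnoses why the naive construction (keep $\HH(E_1)=PW$ and enlarge the target space $\HH(E_2)$) works against itself: enlarging the target generically creates extra multiplier directions. But the argument then stops exactly where the proof has to happen. You say the heart of the matter is to ``exhibit a single $G$'' and verify one-dimensionality ``by direct computation,'' without producing such a $G$ or identifying any mechanism that would force uniqueness. As written this is a plan with its central step missing, and the plan points in the opposite direction from the one that works.

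The paper's construction inverts your picture: it fixes the target $\mathscr{K}_{\Theta}=e^{i\pi z}PW$ (so $\Theta(z)=e^{2\pi i z}$) and chooses the \emph{source} via the critical case of the Ingham--Kadets theorem. Take $E$ the canonical product over $\Lambda=\{-i+n+\operatorname{sign}(n)\delta\}_{n\in\Z}$ with $\delta=1/4$ and set $B=E^*/E$. At this critical perturbation the system $\{e^{i\lambda_n x}\}$ is complete and minimal in $L^2(-\pi,\pi)$ but not a Riesz basis. Completeness says $\Lambda$ is a uniqueness set for $PW$, which translates precisely into $\ker T_{\overline{\Theta}B}=\{0\}$; combined with the factorization $T_{\overline{b_i^+\Theta}B}=T_{\overline{b_i^+}}T_{\overline{\Theta}B}$, $\dim\ker T_{\overline{b_i^+}}=1$, and the Toeplitz-kernel bound of Corollary \ref{55srs6s7sancvc}, this caps $\dim\MM(B,\Theta)$ at one. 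That is the uniqueness mechanism your sketch lacks: a completeness statement converted into injectivity of a Toeplitz operator, rather than a growth-and-type analysis of entire multipliers. On the other side, $|E(x)|\asymp(1+|x|)^{-1/2}$ together with exponential type $\pi$ gives $\HH(E)\subset PW$, so $\Psi=e^{i\pi z}E$ is a nonzero multiplier; and the failure of norm equivalence caused by the weight $(1+|x|)$ (tested on the translates of the sinc function) shows $\HH(E)\subsetneq PW$, i.e., $\Psi\mathscr{K}_B\subsetneq\mathscr{K}_{\Theta}$. Unless you can actually supply a $G$ together with a rigidity proof for it, your route remains incomplete.
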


\begin{proof}
Let $\Theta(z)=e^{i2\pi z}$, so that $\mathscr{K}_{\Theta}=e^{i\pi z}PW$, and let $E(z)$ be the canonical product associated
with the sequence $\Lambda=\{-i+n+\operatorname{sign}(n)\delta\}_{n\in\Z}$ where we now choose the limit
case in the Ingham-Kadets theorem: $\delta=1/4$. As before, set  $B=E^*/E$.

It is known that the family $\mathcal{F}=\{e^{i\lambda_nx}:n\in\Z\}$ is complete and minimal in $L^2(-\pi,\pi)$  \cite[p.~178]{Levin}, from which it can also be deduced that  $E$ is of exponential type $\pi$  (see some standard computations in \cite[p.~177]{Levin} along with a more general result \cite[Theorem 1]{Yu}). 
This yields the following two properties: (i) $\mathcal{H}(E)\subset PW$; (ii) $\ker T_{\overline{\Theta}B}=\{0\}$.
To see (i), observe first 
that on $\R$ we have $E(x)\simeq (1+|x|)^{-2\delta}= (1+|x|)^{-1/2}$ \cite[p.178]{Levin}
so that when $f\in \mathcal{H}(E)$ (see \eqref{o48tugjflkds}), then 
\[
 \int_{\R}\frac{|f|^2}{|E|^2}dm\simeq\int_{\R}|f|^2(1+|x|)dm<\infty,
\]
implying 
that $f \in L^2(\R)$. Moreover, since $E$ is of exponential type $\pi$, if $f\in \mathcal{H}(E)$, then $f$ is also of exponential type $\pi$. So, by an alternate definition of the Paley-Wiener space, we conclude that $f\in PW$.
Property (ii)  follows from the completeness of $\mathcal{F}$ which means that $\Lambda$ is a uniqueness sequence for $PW$. This is equivalent to  $\ker T_{\overline{\Theta}B}=\{0\}$.

We are now in a position to prove our claim.
By (i), as in the proof of Theorem 6.9, define $\Psi(z)=e^{i\pi z}E(z)$ and use \eqref{eq:deBR-model} and \eqref{eq1:exemple-Lyu-Seip} to obtain
$
\Psi \mathscr{K}_{B}=e^{i\pi z} E\mathscr{K}_{B}=e^{i\pi z} \HH(E)\subset e^{i\pi z} PW=\mathscr{K}_{\Theta},
$
and so $\Psi\in \mathcal{M}(B,\Theta)$.
By Corollary 6.3, the dimension of the multiplier space is bounded by that of $\ker T_{\overline{b_i^+\Theta}B}$. By (ii), we have $\ker T_{\overline{\Theta}B}=\{0\}$. Now
$T_{\overline{b_i^+\Theta}B}=T_{\overline{b_i^+}}T_{\overline{\Theta}B}$, and $\dim\ker T_{\overline{b_i^+}}=1$, so, by injectivity of $T_{\overline{\Theta}B}$, at most one function can be sent to 0 by $T_{\overline{b_i^+\Theta}B}$. So the multiplier algebra is at most one dimensional, and, since $\varphi$ already belongs to this algebra, its dimension is precisely one.
Finally it is clear that the weight $(1+|x|)$ appearing in the norm of $\mathcal{H}(E)$ does not produce an equivalent norm to that in $PW$ (one could for instance consider the family $f_n(z)=\frac{\dst\sin(\pi(z-n))}{\dst\pi(z-n)}$) so that $\mathcal{H}(E)\subsetneq PW$.
\end{proof}

\subsection*{Multipliers and Ahern-Clark points} 
When $\MM(u, v) \not = \{0\}$ we know from Proposition \ref{23094ygtoriefb} that $\sigma(u) \subset \sigma(v)$. Is it the case that the boundary behavior in $\K_u$ is the same as in $\K_v$?
To discuss this further, we need the following result of Ahern and Clark \cite{AC70}: For an inner function $u$, 
every $f \in \K_u$ has a non-tangential limit  at $\zeta$ if and only if 
$$\varliminf_{z \to \zeta} \frac{1 - |u(z)|}{1 - |z|} <  \infty.$$
The last equivalent condition says that $u$ has a finite {\em angular derivative} at $\zeta$ and $\zeta$ is called an {\em Ahern-Clark point} for $\K_u$.

In the upper-half plane case note that $\infty$ is an Ahern-Clark point for a model space $\mathscr{K}_{U}$ precisely when $U \circ \omega^{-1}$ has a finite angular derivative at $z = 1$ (equivalently $U$ has an angular derivative at $\infty$). When $U$ is a Blaschke product with zeros $\mu_n$, this happens precisely when 
\begin{equation}\label{77we98re87r8e7re87}
\sum_{n \geqslant 1} \Im \mu_{n} < \infty.
\end{equation}

\begin{Proposition}\label{oooooAC}
There exists two inner functions $U$ and $V$ in the upper half plane such that $\MM(U,V)$ is
non trivial, $\sigma(U)=\sigma(V)=\{\infty\}$, and $V$ has an angular derivative  at $\infty$ while $U$ does not.
\end{Proposition}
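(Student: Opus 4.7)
The plan is to construct the example using the de Branges--Hermite--Biehler framework developed in the proofs of Theorems \ref{nnxbbxvxrx55} and \ref{39782194uro3iq}. First observe that I cannot realize the pair via divisibility $U\mid V$: if that held, the zero sequence of $U$ would be a sub-multiset of that of $V$, and criterion \eqref{77we98re87r8e7re87} would propagate the summability $\sum\Im\mu_n<\infty$ to $U$, forcing $U$ to inherit the angular derivative at $\infty$. Hence the multiplier I exhibit must be a genuinely non-constant analytic function on $\C_+$.

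Fix HB entire functions $E_U$ and $E_V$, and set $U=E_U^*/E_U$, $V=E_V^*/E_V$. Using the identification \eqref{eq:deBR-model}, the function $\Phi:=E_U/E_V$ is analytic on $\C_+$ (since $E_V$ has all its zeros in $\C_-$) and satisfies $\Phi\,\mathscr{K}_U\subset\mathscr{K}_V$ precisely when $\HH(E_U)\subset\HH(E_V)$, which will follow from the pointwise comparison $|E_U(x)|\lesssim |E_V(x)|$ on $\R$. To build in the angular-derivative asymmetry I take $E_U(z)=\sin(\pi(z+i))/\pi$, so that $U$ is the meromorphic inner function with zeros $\{n+i\}_{n\in\Z}$: this gives $\sigma(U)=\{\infty\}$, $|U'(x)|\asymp 1$ on $\R$, $\sum_n\Im(n+i)=\infty$ so $U$ has no angular derivative at $\infty$, and the convenient bound $|E_U(x)|\asymp 1$ on $\R$. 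For $E_V$ I take a canonical product whose zeros $\{z_n\}\subset\C_-$ tend only to $\infty$ and satisfy $\sum|\Im z_n|<\infty$, so that the resulting Blaschke product $V$ has zeros $\{\bar z_n\}$ with $\sum\Im\bar z_n<\infty$; by \eqref{77we98re87r8e7re87} this gives $\sigma(V)=\{\infty\}$ and the desired angular derivative for $V$.

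The main obstacle is arranging the weight comparison $|E_V(x)|\gtrsim 1$ on $\R$ while keeping $\sum|\Im z_n|<\infty$: placing zeros of $E_V$ close to $\R$ creates local dips in $|E_V|$ that threaten the lower bound. Following the Lyubarski--Seip philosophy and the estimate \eqref{eq2:exemple-Lyu-Seip}, I would select $E_V$ of exponential type $\pi$ with zeros on a sufficiently thin subsequence $\{n_k-i\epsilon_k\}$ of integer offsets with $\sum\epsilon_k<\infty$, and introduce a prefactor of polynomial growth on $\R$ to compensate; a two-sided estimate of the form $|E_V(x)|\asymp (1+|x|)^{\alpha}\,\dist(x,\Lambda_V)$ then puts the comparison within reach. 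If a clean pointwise lower bound remains elusive at the zeros of $E_V$, a fallback is to invoke Theorem \ref{|22122144141} and verify only the Carleson-type supremum condition for $|\Phi|^2$ via Baranov's criterion, which is less rigid than a pointwise bound. Either route produces a nontrivial $\Phi=E_U/E_V\in\MM(U,V)$, and the three required properties hold by construction.
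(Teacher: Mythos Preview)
Your framework is exactly the one the paper uses: pick $E_U,E_V\in HB$, set $U=E_U^*/E_U$, $V=E_V^*/E_V$, and exhibit $\Phi=E_U/E_V\in\MM(U,V)$ by proving $\HH(E_U)\subset\HH(E_V)$ via a pointwise bound $|E_U|\lesssim|E_V|$ on $\R$. Your opening remark that $U\mid V$ is impossible here is also correct and worth keeping.

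The gap is that you never actually build $E_V$ or verify the comparison; what you have is a plan with an acknowledged obstruction and a fallback. More seriously, your specific choice $E_U(z)=\sin(\pi(z+i))/\pi$ forces the inequality in the awkward direction: since $|E_U(x)|\asymp 1$, you need a \emph{uniform lower bound} $|E_V(x)|\gtrsim 1$, while simultaneously requiring the zeros of $E_V$ to approach $\R$ so that $\sum|\Im z_n|<\infty$. A polynomial prefactor of fixed degree cannot repair the dips $|E_V(n_k)|\asymp\epsilon_k\to 0$ unless the $n_k$ are extremely sparse, and then you must also explain what the growth and type of $E_V$ are; the Lyubarskii--Seip estimate you invoke does not come for free for an arbitrary thin sequence. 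The Baranov fallback via Theorem~\ref{|22122144141} is in principle available (your $U$ does satisfy $|U'|\asymp 1$), but you would still need to produce $\Phi$ and check both the kernel condition and the local $L^2$ bound, neither of which you do.

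The paper sidesteps all of this by choosing the zeros \emph{geometrically} on both sides: $E_1$ has zeros at $-2^n i$ (so $U$ has zeros $2^n i$ and $\sum 2^n=\infty$), and $E_2$ has zeros at $2^n-2^{-2n}i$ (so $V$ has zeros $2^n+2^{-2n}i$ and $\sum 2^{-2n}<\infty$). Because the moduli $2^n$ are lacunary, on each annulus $|z|\sim 2^m$ a single factor dominates the ratio, giving $|E_1/E_2|\lesssim |z|^3$; one then sets $\widetilde E_2=(z+\tfrac{i}{2})^3E_2$ and obtains $E_1/\widetilde E_2\in H^\infty(\C_+)$ directly, hence $\HH(E_1)\subset\HH(\widetilde E_2)$. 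No lower bound on $|E_V|$ is needed, and no Carleson estimate enters. If you want to salvage your write-up, either switch to lacunary zero sets as above, or keep your $E_U$ but place the zeros of $E_V$ on a lacunary real sequence with imaginary parts $\epsilon_k$ decaying fast enough that a fixed polynomial factor beats the dips, and actually carry out the product estimate.
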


\begin{proof}
Let 
$$E_{1}(z) = \prod_{n = 1}^{\infty} (1 + \frac{z}{2^n i}), \quad E_{2}(z) = \prod_{n = 1}^{\infty} (1 - \frac{z}{2^n - 2^{-2n}i}).$$
Standard estimates from canonical products yield
$$\left|\frac{E_1(z)}{E_{2}(z)}\right| \asymp \left|\frac{z + 2^{m} i}{z - 2^{m} + 2^{-2m} i}\right|, \quad |z| \in [2^{m} - 2^{m - 2}, 2^{m} + 2^{m - 1}].$$
Observe that this fraction is largest when $z$ is close to $2^{m}$ where it behaves like $2^{3 m}$. Setting 
$\widetilde{E}_{2} := (z + \frac{i}{2})^3 E_2,$ we get that 
 $E_1/\widetilde{E}_2$ is bounded on $\C_{+}$ and for any $F \in \mathscr{H}(E_1)$ we have 
$$\frac{F}{\widetilde{E}_{2}} = \frac{F}{E_1} \cdot \frac{E_1}{\widetilde{E}_2}.$$Thus $F \in \mathscr{H}(\widetilde{E}_{2})$. Hence $E_1/\widetilde{E}_2$ is a multiplier from $\mathscr{K}_{U}$ to $\mathscr{K}_{V}$ for the inner functions 
$U = E_{1}^{*}/E_1$ and $V = \widetilde{E}_{2}^{*}/\widetilde{E}_{2}$. The assertions about the Ahern-Clark properties follow from \eqref{77we98re87r8e7re87}. 
\end{proof}

\section{Multipliers and Clark measures}\label{M-Clark-Meas}

If $u$ is inner, we can associate \cite[p.~3]{Duren} a unique positive finite measure $\sigma_u$ on $\T$, called the {\it Clark measure},  via the identity 
\begin{equation}\label{weq78we9dshdjk}
\frac{1-|u(z)|^2}{|1-u(z)|^2}=\int_\T \frac{1-|z|^2}{|z-\xi|^2}\,d\sigma_u(\xi),\quad z\in\D.
\end{equation}
Note that $\sigma_u \perp m$ and that $u(0) = 0$ if and only if $\sigma_u$ is a probability measure. This process can be reversed \cite{CRM, MR2198367}.

We now exploit these measures to obtain additional information about multipliers. 
Using straightforward arguments from the theory of reproducing kernel Hilbert spaces, one obtains the following. 
\begin{Lemma}\label{Lem:CNS-Multiplier}
Let $u,v$ be two inner functions and $\varphi \in H^2$. Then $\varphi\in\MM(u,v)$  if and only if there exists a bounded linear operator $L_{\phi}: \K_v \to \K_u$ satisfying 
$
L_\varphi(k_\lambda^v)=\overline{\varphi(\lambda)}k_\lambda^u, \lambda\in\D.
$

\end{Lemma}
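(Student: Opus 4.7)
The plan is to recognize this as a standard reproducing kernel Hilbert space reformulation and verify both directions by computing against kernels.

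For the forward direction, suppose $\varphi\in\MM(u,v)$. As already observed in Section \ref{MM}, the closed graph theorem ensures that $M_{\varphi}\colon\K_u\to\K_v$, $M_\varphi f=\varphi f$, is bounded, and the computation recalled there gives
\[
M_\varphi^{*}k_{\lambda}^{v}=\overline{\varphi(\lambda)}\,k_{\lambda}^{u},\qquad\lambda\in\D.
\]
So one simply takes $L_\varphi:=M_\varphi^{*}$.

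For the converse, assume a bounded $L_\varphi\colon\K_v\to\K_u$ exists with the prescribed action on kernels. The plan is to show that its adjoint $L_\varphi^{*}\colon\K_u\to\K_v$ is exactly multiplication by $\varphi$. For any $f\in\K_u$ and $\lambda\in\D$, using the reproducing property in $\K_v$ and then in $\K_u$,
\[
(L_\varphi^{*}f)(\lambda)=\langle L_\varphi^{*}f,k_{\lambda}^{v}\rangle=\langle f,L_\varphi k_{\lambda}^{v}\rangle=\langle f,\overline{\varphi(\lambda)}\,k_{\lambda}^{u}\rangle=\varphi(\lambda)\langle f,k_{\lambda}^{u}\rangle=\varphi(\lambda)f(\lambda).
\]
Hence $\varphi f=L_\varphi^{*}f\in\K_v$ for every $f\in\K_u$, so $\varphi\in\MM(u,v)$.

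There is no real obstacle here; the only minor point worth mentioning is that the linear span of the reproducing kernels is dense in $\K_v$, so an operator with the prescribed action on kernels is uniquely determined, making the equivalence a genuine reformulation rather than a one-sided implication. The argument is purely formal RKHS bookkeeping.
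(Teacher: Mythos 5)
Your proof is correct and is precisely the ``straightforward RKHS argument'' the paper invokes without writing out: the forward direction is the adjoint computation $M_\varphi^{*}k_\lambda^{v}=\overline{\varphi(\lambda)}k_\lambda^{u}$ already recorded in Section \ref{MM}, and the converse correctly identifies $L_\varphi^{*}$ as multiplication by $\varphi$ via the reproducing property. Nothing to add.
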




Here is the rephrasing of the lemma above in terms of Clark measures. 

\begin{Theorem}\label{Thm:CNS-abstract-Multiplier}
Let $u,v$ be two inner functions and  $\sigma_u,\sigma_v$ be their associated Clark measures. For $\varphi \in H^2$, the following are equivalent:
\begin{enumerate}
\item[(i)] $\phi \in \MM(u, v)$;
\item[(ii)] there exists a bounded linear operator $\mathfrak L_\varphi: L^2(\sigma_v) \to L^2(\sigma_u)$ satisfying 
\begin{equation}\label{ppsd9sdbbvvxx}
\mathfrak L_\varphi(k_\lambda)=\overline{\varphi(\lambda)}\frac{1-\overline{u(\lambda)}}{1-\overline{v(\lambda)}} k_\lambda,\qquad \lambda\in\D.
\end{equation}
\end{enumerate}
\end{Theorem}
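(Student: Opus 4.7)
My plan is to transport the criterion of Lemma~\ref{Lem:CNS-Multiplier} to the $L^2(\sigma)$ setting via the Clark unitary operators. Recall that for an inner function $w$, the Clark theorem (see e.g.\ \cite{CRM, MR2198367}) provides a unitary operator $V_w : L^2(\sigma_w) \to \K_w$ given by the normalized Cauchy transform
$$
V_w(f)(z) = (1-w(z))\int_\T \frac{f(\xi)}{1-\bar\xi z}\,d\sigma_w(\xi), \quad z \in \D.
$$
A short computation of $\langle V_w f, k_\lambda^w\rangle_{\K_w} = (V_w f)(\lambda)$ identifies the action of the adjoint on Cauchy kernels as
$$
V_w^*(k_\lambda^w) = (1-\overline{w(\lambda)})\,k_\lambda, \quad \lambda \in \D,
$$
where $k_\lambda(\xi) := (1-\bar\lambda\xi)^{-1}$. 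Equivalently, since $V_w$ is unitary, $V_w(k_\lambda) = (1-\overline{w(\lambda)})^{-1} k_\lambda^w$. The single calculation needed to establish both of these relations is the one step in the proof that is not pure diagram chasing.

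Given this identification, $(i) \Longrightarrow (ii)$ proceeds by conjugation. Assuming $\varphi \in \MM(u,v)$, Lemma~\ref{Lem:CNS-Multiplier} yields a bounded operator $L_\varphi : \K_v \to \K_u$ with $L_\varphi(k_\lambda^v) = \overline{\varphi(\lambda)} k_\lambda^u$. I define
$$
\mathfrak{L}_\varphi := V_u^* \circ L_\varphi \circ V_v : L^2(\sigma_v) \to L^2(\sigma_u),
$$
which is bounded as a composition of bounded operators. Chasing $k_\lambda$ through the diagram using the two kernel relations above gives
$$
\mathfrak{L}_\varphi(k_\lambda) = \frac{1}{1-\overline{v(\lambda)}} V_u^*(L_\varphi k_\lambda^v) = \frac{\overline{\varphi(\lambda)}}{1-\overline{v(\lambda)}} V_u^*(k_\lambda^u) = \overline{\varphi(\lambda)} \frac{1-\overline{u(\lambda)}}{1-\overline{v(\lambda)}} k_\lambda,
$$
which is exactly \eqref{ppsd9sdbbvvxx}. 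For the converse $(ii) \Longrightarrow (i)$, starting from a bounded $\mathfrak{L}_\varphi$ satisfying \eqref{ppsd9sdbbvvxx}, I set $L_\varphi := V_u \circ \mathfrak{L}_\varphi \circ V_v^*$ and reverse the computation to recover $L_\varphi(k_\lambda^v) = \overline{\varphi(\lambda)} k_\lambda^u$, then invoke Lemma~\ref{Lem:CNS-Multiplier}.

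The only subtlety is that one must know that $\{k_\lambda\}_{\lambda \in \D}$ has dense linear span in $L^2(\sigma_w)$, so that the operator $\mathfrak{L}_\varphi$ (and similarly $L_\varphi$) is uniquely determined by its values on these kernels. But this is automatic: $\{k_\lambda^w\}_{\lambda\in\D}$ spans a dense subspace of $\K_w$, and $V_w$ is a unitary taking $k_\lambda$ to a nonzero scalar multiple of $k_\lambda^w$, so density transfers. Consequently there is no genuine obstacle; the entire argument reduces to the kernel computation identifying $V_w^*(k_\lambda^w)$ and two lines of diagram chasing on each side.
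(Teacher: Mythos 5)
Your proof is correct and follows essentially the same route as the paper: both conjugate the operator $L_\varphi$ from Lemma~\ref{Lem:CNS-Multiplier} by the Clark unitaries $V_u$, $V_v$ and chase reproducing kernels through the resulting diagram, using the relation $V_w k_\lambda=(1-\overline{w(\lambda)})^{-1}k_\lambda^w$ (which the paper takes as the defining property of $V_w$ and you derive from the normalized Cauchy transform). The only cosmetic difference is that you write $V_u^*$ where the paper writes $V_u^{-1}$ — the same operator, since $V_u$ is unitary.
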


\begin{proof}
Assume that $\varphi\in\MM(u,v)$. By Lemma~\ref{Lem:CNS-Multiplier}, the (bounded) operator $L_{\varphi}: \K_v \to \K_u$ satisfies 
$L_{\varphi} k_\lambda^v=\overline{\varphi(\lambda)}k_\lambda^u,\lambda\in\D$.
Define $\mathfrak L_\varphi :=V_u^{-1} L_{\varphi} V_v: L^2(\sigma_u) \to L^2(\sigma_v)$, where the Clark operator $V_{u}: L^2(\sigma_{u}) \to \K_u$ 
is defined by 
$
V_u k_\lambda=(1-\overline{u(\lambda)})^{-1}k_\lambda^u, \lambda\in\D.
$
A result of Poltoratski \cite{MR1223178} says that every $f\in\K_u$ has radial limits $\sigma_u$-almost everywhere and
$V_u ^{-1}(f)=f$
on the carrier of $\sigma_u$. The identity in \eqref{ppsd9sdbbvvxx} now follows. 

It is easy to see that the above argument can be reversed.
\end{proof}

\begin{Remark}
A similar criterion for multipliers of de Branges--Rovnyak spaces $\mathscr H(b)$ appears in \cite{MR1152354}. 
\end{Remark}

\begin{Corollary}\label{Cor:absolute-continuity}
Let $u,v$ be inner with associated Clark measures $\sigma_u$ and $\sigma_v$ satisfying $\sigma_u\ll \sigma_v$. If $\phi = (1 - v)/(1 - u)$ and $h=d\sigma_u/d\sigma_v$, the following are equivalent: (i) $\varphi \in\MM(u,v)$; (ii) $h\in L^\infty(\sigma_v)$. 
\end{Corollary}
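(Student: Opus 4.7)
The plan is to apply Theorem~\ref{Thm:CNS-abstract-Multiplier} with the key observation that, for the specific choice $\phi = (1-v)/(1-u)$, the multiplicative weight appearing in the kernel identity \eqref{ppsd9sdbbvvxx} trivializes: a direct computation gives
$$\overline{\phi(\lambda)}\,\frac{1-\overline{u(\lambda)}}{1-\overline{v(\lambda)}} = \frac{1-\overline{v(\lambda)}}{1-\overline{u(\lambda)}} \cdot \frac{1-\overline{u(\lambda)}}{1-\overline{v(\lambda)}} = 1, \quad \lambda \in \D.$$
Thus $\phi \in \MM(u,v)$ is equivalent to the existence of a bounded linear operator $\mathfrak{L}_\phi: L^2(\sigma_v) \to L^2(\sigma_u)$ satisfying $\mathfrak{L}_\phi(k_\lambda) = k_\lambda$ for every $\lambda \in \D$.

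Next I would identify this operator as the canonical inclusion $\iota : L^2(\sigma_v) \to L^2(\sigma_u)$, which is well-defined on equivalence classes precisely because $\sigma_u \ll \sigma_v$. To justify this identification, note that the Clark operator $V_v$ is unitary and maps each $k_\lambda$ to the nonzero scalar multiple $(1-\overline{v(\lambda)})^{-1}k_\lambda^v$; since $\{k_\lambda^v : \lambda \in \D\}$ is total in $\K_v$, the set $\{k_\lambda : \lambda \in \D\}$ has dense linear span in $L^2(\sigma_v)$. Because $\iota$ trivially satisfies $\iota(k_\lambda) = k_\lambda$ and any bounded operator agreeing with $\iota$ on a dense subspace must equal $\iota$, the problem reduces to deciding when $\iota$ is bounded.

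Finally, the boundedness of $\iota$ is characterized by $h \in L^\infty(\sigma_v)$ by a standard Radon--Nikodym argument. If $h \in L^\infty(\sigma_v)$, then for every $f \in L^2(\sigma_v)$,
$$\int_\T |f|^2\,d\sigma_u = \int_\T |f|^2 h\,d\sigma_v \leq \|h\|_{L^\infty(\sigma_v)} \|f\|_{L^2(\sigma_v)}^2,$$
which provides the bounded extension. Conversely, if $\iota$ has norm $C$, then testing on indicators $f = \chi_A$ of Borel sets $A \subseteq \T$ yields $\sigma_u(A) \leq C^2 \sigma_v(A)$, forcing $h \leq C^2$ $\sigma_v$-a.e. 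The only mildly delicate step is the density argument pinning down $\mathfrak{L}_\phi$ as $\iota$ (together with the observation that, once $\iota$ is bounded, the bounded operator $V_u \iota V_v^{-1} : \K_v \to \K_u$ implements multiplication by $\phi$ on reproducing kernels, forcing $\phi \in H^2$ and hence legitimizing the application of the theorem); everything else is routine bookkeeping.
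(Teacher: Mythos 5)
Your argument is correct and follows the paper's proof in all essentials: both reduce matters to Theorem~\ref{Thm:CNS-abstract-Multiplier}, observe that the weight in \eqref{ppsd9sdbbvvxx} collapses to $1$ for this particular $\phi$, and then characterize boundedness of the inclusion $L^2(\sigma_v)\to L^2(\sigma_u)$ by $h\in L^\infty(\sigma_v)$. The only deviations are cosmetic — you get density of $\Span\{k_\lambda\}$ from unitarity of the Clark operator rather than from $\sigma_v\perp m$, you test on indicator functions rather than on kernel combinations in the converse direction, and you explicitly note that the bounded operator forces $\phi\in H^2$ before invoking the theorem, a hypothesis the paper leaves implicit.
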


\begin{proof}
$(ii) \implies (i)$: Using Theorem~\ref{Thm:CNS-abstract-Multiplier}, $\phi \in \MM(u, v)$ if and only if there exists a bounded linear operator $\mathfrak L_\varphi:L^2(\sigma_v)\longrightarrow L^2(\sigma_u)$ such that 
$$
\mathfrak L_\varphi(k_\lambda)=\overline{\varphi(\lambda)}\frac{1-\overline{u(\lambda)}}{1-\overline{v(\lambda)}} k_\lambda=k_\lambda,\qquad \lambda\in\D.
$$
For every $f\in L^2(\sigma_v)$, we have 
\begin{align*}
\int_\T |f(\xi)|^2\,d\sigma_u(\xi) & =\int_\T |f(\xi)|^2 h(\xi)\,d\sigma_v(\xi) \leq \|h\|_{L^\infty(\sigma_v)} \|f\|^2_{L^2(\sigma_v)}.
\end{align*}
Hence if we define  $\mathfrak L_\varphi(f)=f$ for $f\in L^2(\sigma_v)$, then $\mathfrak L_\varphi$ is bounded from $L^2(\sigma_v)$ into $L^2(\sigma_u)$, which proves $(1-v)/(1-u) \in \MM(u,v)$. 

$(i)\Longrightarrow (ii)$: Again using Theorem~\ref{Thm:CNS-abstract-Multiplier}, the map $\mathfrak L_\varphi(k_\lambda)=k_\lambda$ extends linearly to a bounded operator from $L^2(\sigma_v)$ into $L^2(\sigma_u)$. In particular, for any $f$ in the linear span of $\{k_\lambda:\lambda\in\D\}$, we have
$$
\int_\T |f|^2 h\,d\sigma_v=\int_\T |f|^2\,d\sigma_u\lesssim \int_\T |f|^2 \,d\sigma_v.
$$
Since the linear span of $\{k_\lambda:\lambda\in\D\}$ is dense in $L^2(\sigma_v)$ (use $\sigma_v \perp m$ along with \cite[p.~59]{Garnett}), we get $h\in L^\infty(\sigma_v)$. 
\end{proof}

\begin{Remark}
It was shown in \cite{Sa} that if $\sigma_u\ll \sigma_v$ and  $h:=d\sigma_u/d\sigma_v$, then $h \in L^2(\sigma_v)$ if and only if  $(1-v)/(1-u)\in H^2$. 

\end{Remark}

\begin{Example}
If $v(z)=\exp((z+1)/(z-1))$, one can show \cite[p.~235]{MSGMR} that the Clark measure $\sigma_v$ is discrete and given by 
$$
\sigma_v=\sum_{n = -\infty}^{\infty}c_n\delta_{z_n},  \quad z_{n} = \frac{2 \pi in - 1}{2 \pi i n + 1}, \quad c_n =\frac{2}{4\pi^2 n^2+1}.$$
Now pick $c_{n}'$ satisfying $0 \leq c_{n}' \leq M c_n$ for some $M \geqslant 1$ and define 
$
\mu'=\sum_{n \geqslant 1}c_{n}'\delta_{z_n}.
$
See \cite[Ch.~11]{MSGMR} for the details on this. 
In other words, we have $d\mu'= h d\sigma_v$, where $0 \leq h \leq M$. Then there is a unique inner function $u$ such that its associated Clark measure is precisely $\mu'$. Corollary~\ref{Cor:absolute-continuity} says that 
$(1 - v)/(1 - u) \in \MM(u, v)$. This construction can be done more generally starting from any finite measure $\sum_{n \geqslant 1} c_n \delta_{z_n}$ on $\T$ and its associated inner function $v$. See also \cite{GMR}.
\end{Example}


\bibliographystyle{plain}

\bibliography{references}

\end{document}